\font \sevenrm=cmr7
\font \fiverm=cmr5
\font\sevenbf=cmbx7
\def\oprec{\!\!\joinrel{\ocircle\hskip -12.5pt \prec}\,}
\def\soprec{\,\joinrel{\ocircle\hskip -7.5pt \prec}\,}
\def\diagramme #1{\vskip 4mm \centerline {#1} \vskip 4mm}
 \newcommand{\nc}{\newcommand}
\newcommand{\nocomma}{}
\newcommand{\noplus}{}
\newcommand{\nosymbol}{}
\newcommand{\tmdummy}{$\mbox{}$}
\newcommand{\tmem}[1]{{\em #1\/}}
\newcommand{\tmmathbf}[1]{\ensuremath{\boldsymbol{#1}}}
\newcommand{\tmop}[1]{\ensuremath{\operatorname{#1}}}
\newcommand{\setcomp}{\mathbb{SC}}
\newcommand{\T}{\mathcal{T}}
\newcommand{\topo}{\mathbb{T}}
\newcommand{\lin}{\mathcal{L}}
\newcommand{\h}{\mathcal{H}}
\newcommand{\QSym}{\mathbf{QSym}}
\newcommand{\WQSym}{\mathbf{WQSym}}
\newcommand{\tdun}[1]{\begin{picture}(12,5)(-2,-1)
\put(0,0){\circle*{2}}
\put(3,-2){\tiny #1}
\end{picture}}
\newcommand{\tddeux}[2]{\begin{picture}(12,10)(0,-1)
\put(3,0){\circle*{2}}
\put(3,5){\circle*{2}}
\put(3,0){\line(0,1){5}}
\put(6,-2){\tiny #1}
\put(6,4){\tiny #2}
\end{picture}}
\newcommand{\tdtroisun}[3]{\begin{picture}(22,12)(-6,-1)
\put(3,0){\circle*{2}}
\put(6,7){\circle*{2}}
\put(0,7){\circle*{2}}
\put(-0.65,0){$\vee$}
\put(5,-2){\tiny #1}
\put(9,5){\tiny #2}
\put(-7,5){\tiny #3}
\end{picture}}
\newcommand{\tdtroisdeux}[3]{\begin{picture}(12,15)(-2,-1)
\put(0,0){\circle*{2}}
\put(0,5){\circle*{2}}
\put(0,10){\circle*{2}}
\put(0,0){\line(0,1){5}}
\put(0,5){\line(0,1){5}}
\put(3,-2){\tiny #1}
\put(3,3){\tiny #2}
\put(3,9){\tiny #3}
\end{picture}}
\newcommand{\pdtroisun}[3]{\begin{picture}(23,12)(-7,-1)
\put(3,7){\circle*{2}}
\put(-0.65,0){$\wedge$}
\put(6,0){\circle*{2}}
\put(0,0){\circle*{2}}
\put(5,5){\tiny #1}
\put(-7,-2){\tiny #2}
\put(9,-2){\tiny #3}
\end{picture}}
\newtheorem{thm}{Theorem}
\newtheorem{cor}[thm]{Corollary}
\newtheorem{lem}[thm]{Lemma}
\newtheorem{prop}[thm]{Proposition}
\newtheorem{defn}{Definition}
\newtheorem{rmk}[thm]{Remark}
\nc{\comment}[1]{[[{\tt #1}]] }
\nc{\Cal}[1]{{\mathcal {#1}}}
\nc{\mop}[1]{\mathop{\hbox {\rm #1} }\nolimits}
\nc{\gmop}[1]{\mathop{\hbox {\bf #1} }\nolimits}
\nc{\smop}[1]{\mathop{\hbox {\sevenrm #1} }\nolimits}
\nc{\ssmop}[1]{\mathop{\hbox {\fiverm #1} }\nolimits}
\nc{\mopl}[1]{\mathop{\hbox {\rm #1} }\limits}
\nc{\smopl}[1]{\mathop{\hbox {\sevenrm #1} }\limits}
\nc{\ssmopl}[1]{\mathop{\hbox {\fiverm #1} }\limits}
\nc{\frakg}{{\frak g}}
\nc{\g}[1]{{\frak {#1}}}
\def \restr#1{\mathstrut_{\textstyle |}\raise-6pt\hbox{$\scriptstyle #1$}}
\def \srestr#1{\mathstrut_{\scriptstyle |}\hbox to
  -1.5pt{}\raise-4pt\hbox{$\scriptscriptstyle #1$}}
\nc{\wt}{\widetilde} \nc{\wh}{\widehat}
\nc{\redtext}[1]{\textcolor{red}{#1}}
\nc{\bluetext}[1]{\textcolor{blue}{#1}}
\nc\fleche[1]{\mathop{\hbox to #1 mm{\rightarrowfill}}\limits}
\nc{\ignore}[1]{}
\def\semi{\mathrel{\times}\kern -.85pt\joinrel\mathrel{\raise
    1.4pt\hbox{${\scriptscriptstyle |}$}}}
\nc\R{{\mathbb R}}
\nc\N{{\mathbb N}}
\nc\inver{^{-1}}
\nc\point{\hbox{\bf .}}
\nc\un{\hbox{\bf 1}}
\def\link#1#2{\raise -2pt\hbox{$\scriptstyle #1-\!\!-\!\!- #2$}}
\def\slink#1#2{\raise -1.5pt\hbox{$\scriptscriptstyle #1-\!\!\!-\!\!\!- #2$}}
\def\arbreccbig{\,{\scalebox{0.8}{
 \begin{picture}(48,98) (349,-205)
    \SetWidth{2}
    \SetColor{Black}
    \Vertex(375,-202){12}
    \SetColor{White}
    \Vertex(371, -200){3}
    \Vertex(379,-200){3}
    \Vertex(375,-207){3}
    \SetColor{Black}
    \Line(376,-195)(395,-165)
    \Line(373,-195)(354,-164)
    \Vertex(353,-161){9}
    \SetColor{White}
    \Vertex(353, -161){3}
    \SetColor{Black}
    \Vertex(395,-163){9}
    \SetColor{White}
    \Vertex(391, -163){3}
    \Vertex(399, -163){3}
    \SetColor{Black}
    \Line(353,-156)(353,-113)
    \Vertex(353,-111){9}
    \SetColor{White}
    \Vertex(350, -108){2.5}
    \Vertex(356, -108){2.5}
    \Vertex(350, -114){2.5}
    \Vertex(356, -114){2.5}
  \end{picture}
}}\,}
\begin{document}

\title[quasi-ormoulds]{The Hopf algebra of finite topologies and mould composition}

\author{Fr\'ed\'eric Fauvet}
\address{IRMA,
10 rue du G\'en\'eral Zimmer,
67084 Strasbourg Cedex, France}
	   \email{frederic.fauvet@gmail.com}
 	  \urladdr{}
	 
\author{Lo\"\i c Foissy}
\address{Universit\'e du Littoral - C\^ote d'Opale, Calais}
	   \email{Loic.Foissy@lmpa.univ-littoral.fr}
 	  \urladdr{}
         
\author{Dominique Manchon}
\address{Universit\'e Blaise Pascal,
         C.N.R.S.-UMR 6620, BP 80026,
         63171 Aubi\`ere, France}       
         \email{manchon@math.univ-bpclermont.fr}
         \urladdr{http://math.univ-bpclermont.fr/~manchon/}

\date{January 26, 2015}

\begin{abstract}
We exhibit an internal coproduct on the Hopf algebra of finite topologies recently defined by the second author, C. Malvenuto and F. Patras,
dual to the composition of "quasi-ormoulds", which are the natural version of J. Ecalle's moulds in this setting.
All these results are displayed in the linear species formalism.

\bigskip

\noindent {\bf{Keywords}}: finite topological spaces, Hopf algebras, mould calculus, posets, quasi-orders.

\smallskip

\noindent {\bf{Math. subject classification}}: 05E05, 06A11, 16T30.
\end{abstract}

%
%

\maketitle





\section{Introduction}
\label{sect:intro}

The study of finite topological spaces was initiated by Alexandroff in 1937,
and revived at several periods since then, using the natural bijection,
recalled below, which exists between these spaces and finite sets endowed with
a quasi--order. In \cite{FMP}, the topic was reexamined through the angle of Hopf
algebraic techniques, which have proved quite pervasive in algebraic
combinatorics in recent years. A number of  so--called combinatorial Hopf algebras
(graded and linearly spanned by combinatorial objects) are now of constant use
in many parts of mathematics, with  frequent occurences of the Hopf
algebras of shuffles and quasishuffles, non commutative symmetric
functions, Connes--Kreimer, Malvenuto--Reutenauer, word quasisymmetric
functions $\WQSym$, etc \cite{FNT14,FPT13,Gessel,Hoff,MR95}. This type of machinery to study finite spaces was
implemented in the article \cite{FMP}, with the introduction of a commutative Hopf
algebra $\mathcal{H}$ based on (isomorphism classes of) quasi--posets. These
constructions were investigated further in the article \cite{FM} , with in
particular the description of a non commutative and non cocommutative Hopf
algebra $\mathcal{H}_T$ based on labelled quasi--posets. In the present text
we show that both $\mathcal{H}$ and $\mathcal{H}_T$ can be endowed with a
second coproduct, which is degree--preserving and as such called internal.

The construction of the coproduct is non--trivial and is in fact achieved
within the formalism of linear species. It would certainly have been very
difficult to find by simple guess but it is in fact directly inspired by an
operation known in J. Ecalle's mould calculus (\cite{Eca92,EV04} as {\tmem{mould composition}}.
The basic facts on these combinatorial objects are recalled in the present text.

In \cite{FM}, a family of natural morphisms from \ $\mathcal{H}_T$ to $\WQSym$ was also
constructed, based on the classical concept of linear extensions (\cite{Sta11}). In
the present text, we show that one of these morphisms also respects the
internal coproduct. Once again, this is realized at the level of species, with
the introduction of a {\tmem{species of set compositions}}, which is a natural
framework to define a morphism which specializes to applications from the Hopf
algebras of quasi--posets onto $\QSym$ an $\WQSym$ (in the commutative and
non--commutative cases respectively) respecting the products and both the
external and internal products. Our results notably entail the definition of a
natural internal coproduct on $\WQSym$.

Recall (see e.g. \cite[\S 2.1]{FM}) that a topology on a finite set $X$ is given by the family $\Cal T$ of open subsets of $X$ subject to the three following axioms:
\begin{itemize}
\item $\emptyset\in\Cal T$, $X\in\Cal T$,
\item The union of a finite number of open subsets is an open subset,
\item The intersection of a finite number of open subsets is an open subset.
\end{itemize}
The finiteness of $X$ allows to consider only finite unions in the second axiom, so that axioms 2 and 3 become dual to each other. In particular the dual topology is defined by 
\begin{equation}
\overline {\Cal T}:=\{X\backslash Y,\,Y\in\Cal T\}.
\end{equation}
In other words, open subsets in $\Cal T$ are closed subsets in $\overline{\Cal T}$ and vice-versa. Any topology $\Cal T$ on $X$ defines a quasi-order (i.e. a reflexive transitive relation) denoted by $\le_{\Cal T}$ on $X$:
\begin{equation}
x\le_{\Cal T}y\Longleftrightarrow \hbox{ any open subset containing $x$ also contains $y$}.
\end{equation}
Conversely, any quasi-order $\le$ on $X$ defines a topology $\Cal T_{\le}$ given by its \textsl{final segments}, i.e. subsets $Y\subset X$ such that ($y\in Y\hbox{ and }y\le z)\Rightarrow z\in Y$. Both operations are inverse to each other: $\le_{\Cal T_{\le}}=\le$ and ${\Cal T}_{\le_{\Cal T}}=\Cal T$. Hence there is a natural bijection between topologies and quasi-orders on a finite set $X$.\\

\noindent Any quasi-order (hence any topology $\Cal T$) on $X$ gives rise to an equivalence class:
\begin{equation}
x\sim_{\Cal T}y\Longleftrightarrow (x\le_{\Cal T} y \hbox{ and }y\le_{\Cal T} x).
\end{equation}
This equivalence relation is trivial if and only if the quasi-order is a (partial) order, which is equivalent to the fact that the topology $\Cal T$ is $T_0$. Any topology $\Cal T$ on $X$ defines a $T_0$ topology on the quotient $X/\sim_{\Cal T}$, corresponding to the partial order induced by the quasi-order $\le_{\Cal T}$. Hence any finite topological set can be represented by the Hasse diagram of its $T_0$ quotient.
\vskip 4mm
$$\arbreccbig$$
\vskip 4mm
\centerline{\sl A finite topological space with 10 elements and 4 equivalence classes}
\vskip 4mm
\noindent\textbf{Acknowlegdements:} This work is supported by Agence Nationale de la Recherche, projet CARMA (Combinatoire Alg\'ebrique, R\'esurgence, Moules et Applications, ANR-12-BS01-0017).

\section{Refinement and quotient topologies}
Let $\Cal T$ and $\Cal T'$ be two topologies on a finite set $X$. We say that $\Cal T'$ is finer than $\Cal T$, and we write $\Cal T'\prec \Cal T$, when any open subset for $\Cal T$ is an open subset for $\Cal T'$. This is equivalent to the fact that for any $x,y\in X$, $x\le_{\Cal T'}y\Rightarrow x\le_{\Cal T}y$.\\

The \textsl{quotient} $\Cal T/\Cal T'$ of two topologies $\Cal T$ and $\Cal T'$ with $\Cal T'\prec \Cal T$ is defined as follows: the associated quasi-order $\le_{\Cal T/\Cal T'}$ is the transitive closure of the relation $\Cal R$ defined by:
\begin{equation}
x\Cal R y\Longleftrightarrow (x\le_{\Cal T} y\hbox{ or }y\le_{\Cal T'} x).
\end{equation}
Note that, contrarily to what is usually meant by "quotient topology", $\Cal T/\Cal T'$ is a topology on the same finite space $X$ than the one on which $\Cal T$ and $\Cal T'$ are given. The definitions immediately yield compatibility of the quotient with the involution, i.e.
\begin{equation}\label{quotient-inv}
\overline{\Cal T/\Cal T'}=\overline{\Cal T}\big /\,\overline{\Cal T'}.
\end{equation}
\noindent{\bf Examples:}
\begin{enumerate}
\item If $\Cal D$ is the discrete topology on $X$, for which any subset is open, the quasi-order $\le_{\Cal D}$ is nothing but $x\le_{\Cal D} y\Leftrightarrow x=y$, and then $\Cal T/\Cal D=\Cal T$.
\item For any topology $\Cal T$, the quotient $\Cal T/\Cal T$ has the same connected components than $\Cal T$, and the restriction of $\Cal T/\Cal T$ to any connected component is the coarse topology. In other words, for any $x,y\in X$, $x$ and $y$ are in the same connected component for $\Cal T$ if and only if $x\le_{\Cal T/\Cal T}y$, which is also equivalent to $x\sim_{\Cal T/\Cal T}y$.
\end{enumerate}
\begin{lem}\label{shrink}
Let $\Cal T''\prec\Cal T'\prec\Cal T$ be three topologies on $X$. Then $\Cal T'/\Cal T''\prec\Cal T/\Cal T''$, and we have the following equality between topologies on $X$:
\begin{equation}
\Cal T/\Cal T'=(\Cal T/\Cal T'')\Big/(\Cal T'/\Cal T'')
\end{equation}
\end{lem}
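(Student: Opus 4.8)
The plan is to pass from topologies to the corresponding quasi-orders and to reduce everything to elementary manipulations of transitive closures. Write $P=\,\le_{\Cal T}$, $Q=\,\le_{\Cal T'}$ and $R=\,\le_{\Cal T''}$, regarded as subsets of $X\times X$; the hypotheses $\Cal T''\prec\Cal T'\prec\Cal T$ amount to $R\subseteq Q\subseteq P$. For a relation $E$, denote by $\overline E$ its transitive closure and by $E^{\mathrm{op}}$ its opposite. I shall use without further comment that $E\mapsto\overline E$ is monotone for inclusion, that $\overline E=E$ whenever $E$ is transitive (so in particular $\overline{\overline E}=\overline E$), and that $(\overline E)^{\mathrm{op}}=\overline{E^{\mathrm{op}}}$. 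By the very definition of the quotient, the quasi-order of $\Cal T/\Cal T'$ is $\overline{P\cup Q^{\mathrm{op}}}$, that of $\Cal T/\Cal T''$ is $A:=\overline{P\cup R^{\mathrm{op}}}$, and that of $\Cal T'/\Cal T''$ is $B:=\overline{Q\cup R^{\mathrm{op}}}$.

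First I would settle the inclusion $\Cal T'/\Cal T''\prec\Cal T/\Cal T''$: from $Q\subseteq P$ one gets $Q\cup R^{\mathrm{op}}\subseteq P\cup R^{\mathrm{op}}$, hence $B\subseteq A$ by monotonicity of the transitive closure, which is exactly that assertion. In particular the quotient $(\Cal T/\Cal T'')/(\Cal T'/\Cal T'')$ is meaningful, and its quasi-order equals $\overline{A\cup B^{\mathrm{op}}}$.

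It then remains to prove the identity $\overline{P\cup Q^{\mathrm{op}}}=\overline{A\cup B^{\mathrm{op}}}$, which I would establish by the two obvious inclusions; set $S:=\overline{P\cup Q^{\mathrm{op}}}$. For $S\subseteq\overline{A\cup B^{\mathrm{op}}}$ it suffices to check that both generators of $S$ lie in $A\cup B^{\mathrm{op}}$: indeed $P\subseteq A$, while $Q\subseteq B$ gives $Q^{\mathrm{op}}\subseteq B^{\mathrm{op}}$, and monotonicity concludes. For the reverse inclusion it suffices to show $A\subseteq S$ and $B^{\mathrm{op}}\subseteq S$, since one further application of monotonicity then yields $\overline{A\cup B^{\mathrm{op}}}\subseteq\overline S=S$. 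Now $A=\overline{P\cup R^{\mathrm{op}}}$ with $P\subseteq S$ and $R^{\mathrm{op}}\subseteq Q^{\mathrm{op}}\subseteq S$, and $S$ is transitive, whence $A\subseteq S$; similarly $B^{\mathrm{op}}=\overline{Q^{\mathrm{op}}\cup R}$ with $Q^{\mathrm{op}}\subseteq S$ and $R\subseteq P\subseteq S$, and $S$ is transitive, whence $B^{\mathrm{op}}\subseteq S$. Combining the two inclusions gives the equality of quasi-orders, hence of the two topologies on $X$.

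There is no genuine difficulty here; the only points that call for a little care are the behaviour of opposite relations under transitive closure and the systematic use of the chain $R\subseteq Q\subseteq P$, in particular the ``long'' inclusion $R\subseteq P$ needed to get $B^{\mathrm{op}}\subseteq S$. If one wishes to trim the bookkeeping, the compatibility of the quotient with the involution, equation \eqref{quotient-inv}, allows one to deduce either of the two inclusions in the previous paragraph from the other by passing to dual topologies.
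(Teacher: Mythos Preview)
Your argument is correct and follows essentially the same route as the paper: both pass to the associated quasi-orders and reduce the statement to the fact that, because of the chain $R\subseteq Q\subseteq P$, the generating relation for $(\Cal T/\Cal T'')\big/(\Cal T'/\Cal T'')$ collapses to that for $\Cal T/\Cal T'$. The only difference is cosmetic: the paper writes out explicit chains and simplifies the four-fold disjunct $\wt{\Cal R}$ to $\Cal R$ elementwise, whereas you phrase the same computation via monotonicity and idempotence of the transitive-closure operator on relations.
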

\begin{proof}
We compare the associated quasi-orders. The first assertion is obvious. For $x,y\in X$ we write $x\Cal R y$ for ($x\le_{\Cal T}y$ or $y\le_{\Cal T'}x$), and $x\Cal Q y$ for ($x\le_{\Cal T/\Cal T''}y$ or $y\le_{\Cal T'/\Cal T''}x$). We have $x\le_{\Cal T/\Cal T'}y$ if and only if there exist $a_1,\ldots, a_p\in X$ such that
$$x\Cal R a_1\Cal R\cdots\Cal R a_p\Cal R y.$$
On the other hand,
\begin{eqnarray*}
x\le_{(\Cal T/\Cal T'')\big/(\Cal T'/\Cal T'')}y &\Longleftrightarrow& \exists b_1,\ldots,b_q\in X,\, x\Cal Q b_1\Cal Q\cdots\Cal Q b_q\Cal Q y\\
&\Longleftrightarrow& \exists c_1,\ldots,c_r\in X,\, x\wt{\Cal R} c_1\wt{\Cal R}\cdots\wt{\Cal R} c_r\wt{\Cal R} y,
\end{eqnarray*}
with 
\begin{eqnarray*}
a\wt{\Cal R}b &\Longleftrightarrow& (a\le_{\Cal T}b \hbox{ or }b\le_{\Cal T''}a) \hbox{ or } (b\le_{\Cal T'}a \hbox{ or } a\le_{\Cal T''}b)\\
&\Longleftrightarrow& a\le_{\Cal T}b \hbox{ or } b\le_{\Cal T'}a\\
&\Longleftrightarrow& a\Cal R b.
\end{eqnarray*}
Hence,
$$x\le_{(\Cal T/\Cal T'')\big/(\Cal T'/\Cal T'')}y\Longleftrightarrow x\le_{\Cal T/\Cal T'} y.$$
\end{proof}
\begin{defn}
Let $\Cal T'\prec \Cal T$ be two topologies on $X$. We will say that $\Cal T'$ is \textsl{$\Cal T$-admissible} if
\begin{itemize}
\item $\Cal T'\restr{Y}=\Cal T\restr{Y}$ for any subset $Y\subset X$ connected for the topology $\Cal T'$, 
\item For any $x,y\in X$, $x\sim_{\Cal T/\Cal T'}y\Longleftrightarrow x\sim_{\Cal T'/\Cal T'}y$.
\end{itemize}
\end{defn}
In particular, $\Cal T$ is $\Cal T$-admissible. We write $\Cal T'\oprec \Cal T$ when $\Cal T'\prec \Cal T$ and $\Cal T'$ is $\Cal T$-admissible. Note that the reverse implication in the second axiom is always true for $\Cal T'\prec\Cal T$. It easily follows from \eqref{quotient-inv} that $\Cal T'\oprec \Cal T$  if and only if $\overline{\Cal T'}\oprec \overline{\Cal T}$ .
\begin{lem}\label{classes}
If $\Cal T'\oprec \Cal T$, then we have for any $x,y\in X$:
$$x\sim_{\Cal T'}y\Longleftrightarrow x\sim_{\Cal T}y.$$
\end{lem}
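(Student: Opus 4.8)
The plan is to compare the associated quasi-orders and treat the two implications separately. The implication $x\sim_{\Cal T'}y\Rightarrow x\sim_{\Cal T}y$ is immediate and uses only $\Cal T'\prec\Cal T$: from $x\le_{\Cal T'}y$ and $y\le_{\Cal T'}x$ one gets $x\le_{\Cal T}y$ and $y\le_{\Cal T}x$, since $\prec$ means precisely that $\le_{\Cal T'}$ is contained in $\le_{\Cal T}$.

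For the converse, assume $x\sim_{\Cal T}y$. First I would observe that $x\le_{\Cal T}y$ already implies $x\,\Cal R\,y$ for the relation $\Cal R$ defining $\Cal T/\Cal T'$ (namely $a\,\Cal R\,b\Leftrightarrow a\le_{\Cal T}b$ or $b\le_{\Cal T'}a$), hence $x\le_{\Cal T/\Cal T'}y$; symmetrically $y\le_{\Cal T/\Cal T'}x$, so $x\sim_{\Cal T/\Cal T'}y$. The second admissibility axiom then yields $x\sim_{\Cal T'/\Cal T'}y$, and by Example 2 of Section 2 (applied to $\Cal T'$ instead of $\Cal T$) this says exactly that $x$ and $y$ lie in one and the same connected component $Y$ of $\Cal T'$.

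Next I would invoke the first admissibility axiom: since $Y$ is connected for $\Cal T'$, we have $\Cal T'\restr{Y}=\Cal T\restr{Y}$. It then remains to note the elementary fact that, for points inside a subset $Y$, the quasi-order of the subspace topology is just the restriction of the quasi-order: an open set of $Y$ containing $x$ is of the form $U\cap Y$ with $U$ open in $X$, and $y\in Y$, so $x\le_{\Cal T\restr{Y}}y\Leftrightarrow x\le_{\Cal T}y$ for $x,y\in Y$, and likewise for $\Cal T'$. Combining these remarks: $x\sim_{\Cal T}y$ with $x,y\in Y$ gives $x\sim_{\Cal T\restr{Y}}y$, hence $x\sim_{\Cal T'\restr{Y}}y$ by the equality of the restricted topologies, hence $x\sim_{\Cal T'}y$, which closes the argument.

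The only real obstacle is the bookkeeping: one must recognise that $\le_{\Cal T}$ embeds into $\le_{\Cal T/\Cal T'}$ in a single step (no transitive closure is needed here), so that the second admissibility condition becomes applicable, and that the $\sim_{\Cal T'/\Cal T'}$-classes are the connected components of $\Cal T'$, so that the first admissibility condition becomes available on the relevant component. Once these two identifications are in place, the proof reduces to the trivial compatibility of a quasi-order with restriction to a subset.
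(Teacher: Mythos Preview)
Your proof is correct and follows essentially the same route as the paper's: the direct implication is trivial from $\Cal T'\prec\Cal T$, and for the converse you pass from $x\sim_{\Cal T}y$ to $x\sim_{\Cal T/\Cal T'}y$, then to $x\sim_{\Cal T'/\Cal T'}y$ via the second admissibility axiom, identify this with being in the same $\Cal T'$-connected component, and finish using the first admissibility axiom. The paper does exactly this, only more tersely; your additional bookkeeping (the explicit verification that $\le_{\Cal T}$ restricts correctly to a subset) is sound but not strictly necessary here.
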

\begin{proof}
The direct implication is obvious. Conversely, if $x\sim_{\Cal T}y$ then $x\sim_{\Cal T/\Cal T'}y$, hence $x\sim_{\Cal T'/\Cal T'}y$, which means that $x$ and $y$ are in the same $\Cal T'$-connected component. The restrictions of $\Cal T$ and $\Cal T'$ on this component coincide, hence $x\sim_{\Cal T'}y$ .
\end{proof}
\begin{lem}\label{composantes-connexes}
If $\Cal T'\prec \Cal T$, the connected components of $\Cal T/\Cal T'$ are the same than those of $\Cal T$.
\end{lem}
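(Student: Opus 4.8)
The plan is to reduce the statement to the standard description of the connected components of a finite topological space in terms of its quasi-order: for a topology $\Cal S$ on $X$, two points $x,y$ lie in the same connected component if and only if there is a finite \emph{fence} $x=z_0,z_1,\dots,z_n=y$ whose consecutive terms are comparable for $\le_{\Cal S}$ (i.e.\ $z_{i-1}\le_{\Cal S}z_i$ or $z_i\le_{\Cal S}z_{i-1}$); equivalently, by the second Example above, if and only if $x\le_{\Cal S/\Cal S}y$. Hence it is enough to show that the ``fence equivalence relation'' attached to $\le_{\Cal T/\Cal T'}$ and the one attached to $\le_{\Cal T}$ coincide, which I will obtain from the two inclusions $\le_{\Cal T}\ \subseteq\ \Cal R\ \subseteq\ \le_{\Cal T/\Cal T'}$ together with the refinement hypothesis.

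First I would record $\le_{\Cal T}\ \subseteq\ \le_{\Cal T/\Cal T'}$: if $x\le_{\Cal T}y$ then $x\Cal R y$ by the first clause in the definition of $\Cal R$, so $x\le_{\Cal T/\Cal T'}y$. Therefore every pair comparable for $\Cal T$ is comparable for $\Cal T/\Cal T'$, so any $\Cal T$-fence is also a $\Cal T/\Cal T'$-fence; this already shows that each connected component of $\Cal T$ is contained in a connected component of $\Cal T/\Cal T'$.

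For the converse inclusion I would unwind $\le_{\Cal T/\Cal T'}$: if $x\le_{\Cal T/\Cal T'}y$ there are $a_1,\dots,a_p\in X$ with $x\Cal R a_1\Cal R\cdots\Cal R a_p\Cal R y$, where each step $a\Cal R b$ means $a\le_{\Cal T}b$ or $b\le_{\Cal T'}a$. Since $\Cal T'\prec\Cal T$ gives $b\le_{\Cal T'}a\Rightarrow b\le_{\Cal T}a$, every step $a\Cal R b$ forces $a$ and $b$ to be $\Cal T$-comparable; thus $x$ and $y$ are joined by a $\Cal T$-fence, hence lie in the same connected component of $\Cal T$. As a $\Cal T/\Cal T'$-fence is a concatenation of $\Cal T/\Cal T'$-comparable pairs and each such pair stays within one $\Cal T$-component (which, the pieces sharing endpoints, makes the whole fence stay within one $\Cal T$-component), each connected component of $\Cal T/\Cal T'$ is contained in a connected component of $\Cal T$. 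Combining the two inclusions yields the claimed equality of connected components.

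I do not expect a genuine obstacle here: the only point deserving attention is to fix the combinatorial meaning of ``connected component'' for a finite space --- the fence characterization, equivalently the content of the second Example above --- after which the argument is just the chain of inclusions $\le_{\Cal T}\subseteq\Cal R\subseteq\le_{\Cal T/\Cal T'}$ together with the collapse of $\Cal R$ into $\Cal T$-comparability provided by $\Cal T'\prec\Cal T$.
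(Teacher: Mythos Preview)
Your argument is correct. The paper's own proof takes a different, shorter route: it invokes Example~2 (the connected components of any topology $\Cal S$ are the equivalence classes of $\Cal S/\Cal S$) and then applies Lemma~\ref{shrink} with $\Cal T''=\Cal T'$ and the two upper topologies both equal to $\Cal T$, obtaining $(\Cal T/\Cal T')\big/(\Cal T/\Cal T')=\Cal T/\Cal T$ in one stroke. You instead bypass Lemma~\ref{shrink} and work directly from the definition of the transitive closure: since $\Cal T'\prec\Cal T$, every $\Cal R$-step collapses to $\Cal T$-comparability, so a $\Cal T/\Cal T'$-fence is already a $\Cal T$-fence. This is more elementary and self-contained, at the price of essentially re-deriving by hand the special case of Lemma~\ref{shrink} that the paper simply quotes; the paper's version is terser but leans on the machinery already in place.
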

\begin{proof}
The connected components of $\Cal T$, resp. $\Cal T/\Cal T'$, are nothing but the equivalence classes for $\Cal T/\Cal T$, resp.$( \Cal T/\Cal T')\big/(\Cal T/\Cal T')$. These two topologies coincide according to Lemma \ref{shrink}. 
\end{proof}
\begin{prop}\label{transitivite}
The relation $\oprec$ is transitive.
\end{prop}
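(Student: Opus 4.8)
Throughout, write $\Cal T''\oprec\Cal T'$ and $\Cal T'\oprec\Cal T$; the plan is to verify the two clauses in the definition of admissibility for the pair $\Cal T''\prec\Cal T$. That $\Cal T''\prec\Cal T$ holds is immediate, since $\prec$ is obviously transitive. For the first clause, let $Y\subset X$ be connected for $\Cal T''$. Because $\Cal T''\prec\Cal T'$, the set $Y$ is also connected for the coarser topology $\Cal T'$, so $\Cal T''\restr{Y}=\Cal T'\restr{Y}$ by admissibility of $\Cal T''$ over $\Cal T'$, while $\Cal T'\restr{Y}=\Cal T\restr{Y}$ by admissibility of $\Cal T'$ over $\Cal T$; hence $\Cal T''\restr{Y}=\Cal T\restr{Y}$.

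The substantial point is the second clause, $x\sim_{\Cal T/\Cal T''}y\Leftrightarrow x\sim_{\Cal T''/\Cal T''}y$; the reverse implication is automatic (as noted after the definition), so assume $x\sim_{\Cal T/\Cal T''}y$. Concatenating a chain realizing $x\le_{\Cal T/\Cal T''}y$ with one realizing $y\le_{\Cal T/\Cal T''}x$ produces a closed chain
$$x=a_0\,\Cal R\,a_1\,\Cal R\,\cdots\,\Cal R\,a_n=x$$
passing through $y$, where $a_i\,\Cal R\,a_{i+1}$ means $a_i\le_\Cal T a_{i+1}$ or $a_{i+1}\le_{\Cal T''}a_i$. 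The key \emph{localization claim} is that every $a_i$ lies in the connected component $C$ of $x$ for $\Cal T'$. To prove it, write $[z]$ for the $\Cal T'$-connected component of $z\in X$. If $a_{i+1}\le_{\Cal T''}a_i$ then $a_{i+1}\le_{\Cal T'}a_i$ (as $\Cal T''\prec\Cal T'$), whence $[a_i]=[a_{i+1}]$; if $a_i\le_\Cal T a_{i+1}$ then $a_i\le_{\Cal T/\Cal T'}a_{i+1}$. Now $\Cal T'\oprec\Cal T$ says precisely that the equivalence classes of $\le_{\Cal T/\Cal T'}$ are the $\Cal T'$-connected components, so $\le_{\Cal T/\Cal T'}$ induces a genuine \emph{partial} order $\trianglelefteq$ on those components, for which $[a_i]\trianglelefteq[a_{i+1}]$ in both cases. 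Hence $[a_0]\trianglelefteq[a_1]\trianglelefteq\cdots\trianglelefteq[a_n]=[a_0]$, and antisymmetry forces $[a_i]=C$ for every $i$.

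With the localization claim in hand the rest is short. Since $C$ is connected for $\Cal T'$ and $\Cal T'\oprec\Cal T$, the first admissibility clause gives $\Cal T\restr{C}=\Cal T'\restr{C}$, hence $\le_\Cal T$ and $\le_{\Cal T'}$ agree on $C\times C$ (the quasi-order of a subspace topology being the restriction of the ambient one). As all the $a_i$ lie in $C$, each step $a_i\le_\Cal T a_{i+1}$ may be rewritten as $a_i\le_{\Cal T'}a_{i+1}$, so the very same closed chain is now a chain for the relation defining $\Cal T'/\Cal T''$; reading its two halves yields $x\le_{\Cal T'/\Cal T''}y$ and $y\le_{\Cal T'/\Cal T''}x$, i.e. $x\sim_{\Cal T'/\Cal T''}y$. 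Applying the second admissibility clause for $\Cal T''\oprec\Cal T'$ then gives $x\sim_{\Cal T''/\Cal T''}y$, as required.

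The one delicate step is the localization claim; everything else is routine bookkeeping with topologies, their associated quasi-orders, and restrictions to subsets. I expect the monotonicity-plus-antisymmetry argument for $\trianglelefteq$ to be the crux, and I would state it in the slightly more general form that a closed $\Cal R$-chain of the above mixed type stays inside a single $\Cal T'$-component whenever the finer topology used in the reversed steps refines $\Cal T'$.
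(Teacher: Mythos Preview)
Your proof is correct and follows essentially the same route as the paper's: both verify the first admissibility clause by the obvious chain $\Cal T''\restr{Y}=\Cal T'\restr{Y}=\Cal T\restr{Y}$, and for the second clause both take a defining chain for $x\sim_{\Cal T/\Cal T''}y$, observe that every link lies in a single $\Cal T'$-connected component (using $\Cal T'\oprec\Cal T$), replace each $\le_\Cal T$ step by $\le_{\Cal T'}$ on that component to obtain $x\sim_{\Cal T'/\Cal T''}y$, and then conclude via $\Cal T''\oprec\Cal T'$. Your packaging of the localization step---passing to the induced partial order $\trianglelefteq$ on $\Cal T'$-components and invoking antisymmetry on the closed chain---is a clean way to make explicit the point the paper handles more tersely by asserting $x\sim_{\Cal T/\Cal T'}a_1\sim_{\Cal T/\Cal T'}\cdots\sim_{\Cal T/\Cal T'}y$.
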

\begin{proof}
Let $\Cal T''\prec\Cal T'\prec \Cal T$ be three topologies on $X$. Suppose that $\Cal T''$ is $\Cal T'$-admissible, and that $\Cal T'$ is $\Cal T$-admissible. If $Y\subset X$ is $\Cal T''$-connected, it is also $\Cal T'$-connected, hence $\Cal T''\restr Y=\Cal T'\restr Y=\Cal T\restr Y$. Now let $x,y\in X$ with $x\sim_{\Cal T/\Cal T''}y$. By definition of the transitive closure, there exist $a_1,\ldots, a_p$ and $
b_1,\ldots,b_p$ in $X$ such that 
$$x\le_{\Cal T} a_1,\,b_1\le_{\Cal T}a_2,\ldots, b_p\le_{\Cal T}y$$
and $a_i\ge_{\Cal T''} b_i$ for $i=1,\ldots ,p$. We also have $a_i\ge_{\Cal T'} b_i$ for $i=1,\ldots ,p$ because $\Cal T''\prec\Cal T'$. Hence,
$$x\sim_{\Cal T/\Cal T'}a_1\sim_{\Cal T/\Cal T'}b_1\sim_{\Cal T/\Cal T'}\cdots
\sim_{\Cal T/\Cal T'}a_p\sim_{\Cal T/\Cal T'}b_p\sim_{\Cal T/\Cal T'}y,$$
from which we get:
$$x\sim_{\Cal T'/\Cal T'}a_1\sim_{\Cal T'/\Cal T'}b_1\sim_{\Cal T'/\Cal T'}\cdots
\sim_{\Cal T'/\Cal T'}a_p\sim_{\Cal T'/\Cal T'}b_p\sim_{\Cal T'/\Cal T'}y,$$
hence $x$ and $y$ are in the same $\Cal T'$-connected component. Using that the restrictions of $\Cal T$ and $\Cal T'$ on this component coincide, we get $x\sim_{\Cal T'/\Cal T''}y$. From $\Cal T'\oprec\Cal T$ we get then $x\sim_{\Cal T''/\Cal T''}y$. This ends up the proof of Proposition \ref{transitivite}.
\end{proof}
\begin{lem}
If $\Cal T"\oprec\Cal T'\oprec\Cal T$, then $\Cal T'/\Cal T''\oprec\Cal T/\Cal T''$.
\end{lem}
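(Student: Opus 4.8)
Since $\Cal T''\oprec\Cal T'\oprec\Cal T$ entails $\Cal T''\prec\Cal T'\prec\Cal T$, the quotient formula of Lemma~\ref{shrink} and the identification of connected components of Lemma~\ref{composantes-connexes} are available throughout. The plan is to verify, for $\Cal T'/\Cal T''$, the two conditions of $(\Cal T/\Cal T'')$-admissibility; the refinement $\Cal T'/\Cal T''\prec\Cal T/\Cal T''$ itself is the first assertion of Lemma~\ref{shrink} applied to $\Cal T''\prec\Cal T'\prec\Cal T$.

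The second condition is essentially formal. By Lemma~\ref{shrink}, $(\Cal T/\Cal T'')\big/(\Cal T'/\Cal T'')=\Cal T/\Cal T'$, so $x\sim_{(\Cal T/\Cal T'')/(\Cal T'/\Cal T'')}y$ means exactly $x\sim_{\Cal T/\Cal T'}y$. Writing $\Cal S:=\Cal T'/\Cal T''$, the relation $x\sim_{\Cal S/\Cal S}y$ holds precisely when $x$ and $y$ lie in the same connected component of $\Cal S$; by Lemma~\ref{composantes-connexes} (for $\Cal T''\prec\Cal T'$) these are the connected components of $\Cal T'$, so $x\sim_{\Cal S/\Cal S}y\Leftrightarrow x\sim_{\Cal T'/\Cal T'}y$. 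Hence the second condition for $\Cal T'/\Cal T''\oprec\Cal T/\Cal T''$ reads $x\sim_{\Cal T/\Cal T'}y\Leftrightarrow x\sim_{\Cal T'/\Cal T'}y$, which is exactly the second condition of the hypothesis $\Cal T'\oprec\Cal T$.

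The first condition carries the content: $(\Cal T'/\Cal T'')\restr Y=(\Cal T/\Cal T'')\restr Y$ for every $Y$ connected for $\Cal T'/\Cal T''$. Such a $Y$ lies in a single connected component of $\Cal T'/\Cal T''$, hence (Lemma~\ref{composantes-connexes}) in a single connected component $C$ of $\Cal T'$, so it suffices to establish the stronger equality $(\Cal T'/\Cal T'')\restr C=(\Cal T/\Cal T'')\restr C$ for an arbitrary connected component $C$ of $\Cal T'$. Reading this on quasi-orders, the inclusion $\le_{\Cal T'/\Cal T''}\subseteq\le_{\Cal T/\Cal T''}$ already holds on all of $X$ (Lemma~\ref{shrink}), so it remains to show that, for $x,y\in C$, the relation $x\le_{\Cal T/\Cal T''}y$ forces $x\le_{\Cal T'/\Cal T''}y$. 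Fix a chain $x=z_0\,\Cal R\,z_1\,\Cal R\cdots\Cal R\,z_n=y$ realizing $x\le_{\Cal T/\Cal T''}y$, where $a\,\Cal R\,b$ abbreviates ``$a\le_{\Cal T}b$ or $b\le_{\Cal T''}a$''. The crucial claim is that such a chain never leaves $C$. First, a step $z_{\ell-1}\,\Cal R\,z_\ell$ from a point of $C$ to a point outside $C$ must read $z_{\ell-1}\le_{\Cal T}z_\ell$, since $z_\ell\le_{\Cal T''}z_{\ell-1}$ would give $z_\ell\le_{\Cal T'}z_{\ell-1}$, hence $z_\ell\in C$; symmetrically, a step from outside $C$ back into $C$ must read $z_\ell\le_{\Cal T}z_{\ell+1}$. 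Now suppose, for contradiction, that $z_i,\dots,z_j$ (with $1\le i\le j\le n-1$) is a maximal run of consecutive indices with $z_\ell\notin C$; then $z_{i-1},z_{j+1}\in C$, the boundary steps read $z_{i-1}\le_{\Cal T}z_i$ and $z_j\le_{\Cal T}z_{j+1}$, and the intermediate sub-chain gives $z_i\le_{\Cal T/\Cal T''}z_j$, hence $z_i\le_{\Cal T/\Cal T'}z_j$ because $\Cal T''\prec\Cal T'$. Since $\le_{\Cal T}\subseteq\le_{\Cal T/\Cal T'}$, this yields $z_{i-1}\le_{\Cal T/\Cal T'}z_i\le_{\Cal T/\Cal T'}z_j\le_{\Cal T/\Cal T'}z_{j+1}$; as $z_{i-1}$ and $z_{j+1}$ lie in the single $\Cal T'$-component $C$ they are $\sim_{\Cal T'/\Cal T'}$-equivalent, hence $\sim_{\Cal T/\Cal T'}$-equivalent by the second condition of $\Cal T'\oprec\Cal T$, so $z_{j+1}\le_{\Cal T/\Cal T'}z_{i-1}$ and therefore $z_i\sim_{\Cal T/\Cal T'}z_{i-1}$. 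The same condition then gives $z_i\sim_{\Cal T'/\Cal T'}z_{i-1}$, i.e. $z_i$ and $z_{i-1}$ lie in the same $\Cal T'$-component, contradicting $z_i\notin C$. Thus all $z_\ell$ lie in $C$; and since $C$ is connected for $\Cal T'$, the first condition of $\Cal T'\oprec\Cal T$ gives $\Cal T\restr C=\Cal T'\restr C$, so each step ``$z_{\ell-1}\le_{\Cal T}z_\ell$'' is in fact a step ``$z_{\ell-1}\le_{\Cal T'}z_\ell$'', and the chain now witnesses $x\le_{\Cal T'/\Cal T''}y$.

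The one delicate point is this ``no escape from $C$'' claim, and in particular the short bookkeeping around a maximal run of indices outside $C$ (so that its endpoints lie in $C$ and both boundary steps are genuine $\le_{\Cal T}$-inequalities); everything else is a routine combination of Lemmas~\ref{shrink} and~\ref{composantes-connexes} with the two defining conditions of the relation $\oprec$.
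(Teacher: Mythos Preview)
Your proof is correct. The verification of the second admissibility condition is exactly the paper's argument: you use Lemma~\ref{shrink} to rewrite $(\Cal T/\Cal T'')/(\Cal T'/\Cal T'')$ as $\Cal T/\Cal T'$ and $(\Cal T'/\Cal T'')/(\Cal T'/\Cal T'')$-equivalence as $\Cal T'/\Cal T'$-equivalence (via Lemma~\ref{composantes-connexes}), reducing the statement to the second condition of the hypothesis $\Cal T'\oprec\Cal T$.

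Where you go beyond the paper is in the first admissibility condition. The paper's proof checks only the equivalence-class condition and says nothing about why $(\Cal T'/\Cal T'')\restr Y=(\Cal T/\Cal T'')\restr Y$ on $(\Cal T'/\Cal T'')$-connected sets $Y$. Your chain argument---showing that any $\le_{\Cal T/\Cal T''}$-chain between two points of a $\Cal T'$-connected component $C$ cannot leave $C$, by analysing a hypothetical maximal excursion and contradicting the second condition of $\Cal T'\oprec\Cal T$---is genuinely needed here, because quotienting by transitive closure does not in general commute with restriction. Once the chain is trapped in $C$, the first condition of $\Cal T'\oprec\Cal T$ turns every $\le_{\Cal T}$-step into a $\le_{\Cal T'}$-step and the proof closes. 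So your proposal is not just a different route: it actually fills a gap that the paper's proof leaves open.
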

\begin{proof}
Let $x,y\in X$ with $x\sim_{(\Cal T/\Cal T'')/(\Cal T'/\Cal T'')}y$. Then $x\sim_{\Cal T/\Cal T'}y$ according to Lemma \ref{shrink}, hence $x\sim_{\Cal T'/\Cal T'}y$, hence $x\sim_{(\Cal T'/\Cal T'')/(\Cal T'/\Cal T'')}y$ applying Lemma \ref{shrink} again.
\end{proof}
\begin{prop}\label{transit}
Let $\Cal T$ and $\Cal T''$ be two topologies on $X$. If $\Cal T''\oprec\Cal T$, then $\Cal T'\mapsto\Cal T'/\Cal T''$ is a bijection from the set of topologies $\Cal T'$ on $X$ such that $\Cal T''\oprec\Cal T$, onto the set of topologies $\Cal U$ on $X$ such that $\Cal U\oprec \Cal T/\Cal T''$.
\end{prop}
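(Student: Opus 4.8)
The plan is to write down an explicit inverse of the map $\Phi\colon\Cal T'\mapsto\Cal T'/\Cal T''$. Abbreviate $\Cal S:=\Cal T/\Cal T''$; by the lemma immediately above, $\Phi$ sends $\{\Cal T'\ :\ \Cal T''\oprec\Cal T'\oprec\Cal T\}$ into $\{\Cal U\ :\ \Cal U\oprec\Cal S\}$. For a topology $\Cal U$ with $\Cal U\oprec\Cal S$ I would set $\Psi(\Cal U)$ to be the topology whose quasi-order is $\le_{\Cal T}\cap\le_{\Cal U}$ (an intersection of two quasi-orders is again one), and then show that $\Psi$ is a two-sided inverse of $\Phi$.

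For $\Psi\circ\Phi=\mathrm{id}$: given $\Cal T''\oprec\Cal T'\oprec\Cal T$, the inclusion $\le_{\Cal T'}\subseteq\le_{\Cal T}\cap\le_{\Cal T'/\Cal T''}$ is immediate, so only the converse is at stake: if $x\le_{\Cal T}y$ and $x\le_{\Cal T'/\Cal T''}y$ then a chain realizing the latter has steps that are $\le_{\Cal T'}$ upward or $\le_{\Cal T''}$ downward, and since $\Cal T''\prec\Cal T'$ every step joins two $\Cal T'$-comparable points; hence $x$ and $y$ sit in one $\Cal T'$-connected component $C$, on which $\Cal T'\restr{C}=\Cal T\restr{C}$ because $\Cal T'\oprec\Cal T$, so $x\le_{\Cal T}y$ forces $x\le_{\Cal T'}y$.

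The bulk of the work is to show, for $\Cal U\oprec\Cal S$ and $\Cal T':=\Psi(\Cal U)$, that $\Cal T''\oprec\Cal T'\oprec\Cal T$ and $\Cal T'/\Cal T''=\Cal U$ (the latter being exactly $\Phi\circ\Psi=\mathrm{id}$). I would first record the preliminary $\le_{\Cal T''}\subseteq\le_{\Cal U}$: from the definition of $\Cal S=\Cal T/\Cal T''$, $x\le_{\Cal T''}y$ yields both $x\le_{\Cal S}y$ and $y\le_{\Cal S}x$, hence $x\sim_{\Cal S}y$, hence $x\sim_{\Cal U}y$ by Lemma~\ref{classes}; so $\le_{\Cal T''}\subseteq\le_{\Cal T'}\subseteq\le_{\Cal T}$, and $\Cal T''\prec\Cal T'\prec\Cal T$. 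For $\Cal T'/\Cal T''=\Cal U$, the inclusion $\le_{\Cal T'/\Cal T''}\subseteq\le_{\Cal U}$ holds because the relation defining that quotient lands inside the transitive relation $\le_{\Cal U}$ (downward $\le_{\Cal T''}$ steps do so by the preliminary). For the reverse inclusion I would take $x\le_{\Cal U}y$, let $Z$ be their common $\Cal U$-connected component, note $x\le_{\Cal S}y$ since $\Cal U\restr{Z}=\Cal S\restr{Z}$, and pick a chain $x=w_0,\dots,w_n=y$ realizing $x\le_{\Cal S}y$; then $x\le_{\Cal S}w_i\le_{\Cal S}y$ for each $i$ while $x\le_{\Cal U}y$ gives $y\le_{\Cal S/\Cal U}x$, so $w_i\sim_{\Cal S/\Cal U}x$, whence $w_i\in Z$ by the second admissibility axiom of $\Cal U\oprec\Cal S$. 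The chain therefore stays in $Z$, where $\le_{\Cal T'}$ coincides with $\le_{\Cal T}$, so it witnesses $x\le_{\Cal T'/\Cal T''}y$. The same analysis identifies the $\Cal T'$-connected components with the $\Cal U$-connected components, on each of which $\Cal T'$ agrees with $\Cal T$: this is the first admissibility axiom for both $\Cal T'\oprec\Cal T$ and $\Cal T''\oprec\Cal T'$ (each $\Cal T''$-component lying in a $\Cal U$-component), and for the second axioms I would use Lemma~\ref{shrink} to rewrite $\Cal T/\Cal T'=\Cal S/\Cal U$ and $\Cal T'/\Cal T''=\Cal U$ and then apply the second axioms of $\Cal U\oprec\Cal S$ and $\Cal T''\oprec\Cal T$ together with Lemma~\ref{classes}.

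The step I expect to be the genuine obstacle is the inclusion $\le_{\Cal U}\subseteq\le_{\Cal T'/\Cal T''}$: one has to know that a chain witnessing $x\le_{\Cal S}y$ for $x,y$ in a single $\Cal U$-component $Z$ can be confined to $Z$, and this is precisely where the second admissibility axiom of $\Cal U\oprec\Cal T/\Cal T''$ enters — detected through the auxiliary quotient $\Cal S/\Cal U$. Everything else is routine bookkeeping with Lemmas~\ref{shrink} and~\ref{classes} and the definitions of quotient and admissibility.
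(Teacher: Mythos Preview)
Your proof is correct and follows essentially the same route as the paper. The only cosmetic difference is in how the inverse is written down: you set $\le_{\Psi(\Cal U)}:=\le_{\Cal T}\cap\le_{\Cal U}$, whereas the paper declares that $\Cal T'$ has the same connected components as $\Cal U$ and agrees with $\Cal T$ on each; these two descriptions coincide (since $a\le_{\Cal T}b$ with $a,b$ in one $\Cal U$-component forces $a\le_{\Cal S}b$ and hence $a\le_{\Cal U}b$ via $\Cal U\restr Z=\Cal S\restr Z$), and the heart of both arguments---confining an $\le_{\Cal S}$-chain to a single $\Cal U$-component by passing through $\sim_{\Cal S/\Cal U}$ and the second admissibility axiom---is identical.
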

\begin{proof}
Given $\Cal U\oprec \Cal T/\Cal T''$, we have to prove the existence of a unique $\Cal T'$ such that $\Cal T''\oprec\Cal T'\oprec\Cal T$ and $\Cal U=\Cal T'/\Cal T''$. According to Lemma \ref{composantes-connexes}, the connected components of $\Cal T'$ must be those of $\Cal U$. The topologies $\Cal T'$ and $\Cal T$ must coincide on each of these components, which uniquely defines $\Cal T'$.\\

Let us now check $\Cal T''\oprec\Cal T'\oprec\Cal T$: if $x\le_{\Cal T'}y$, then $x$ and $y$ are in the same $\Cal T'$-connected component, on which $\Cal T$ and $\Cal T'$ coincide. Hence $x\le_{\Cal T}y$, which means $\Cal T'\prec \Cal T$. Now suppose $x\le_{\Cal T''}y$. Then $x\le_{\Cal T} y$, which implies $x\le_{\Cal T/\Cal T''}y$, which in turn implies $x\le_{(\Cal T/\Cal T'')/\Cal U}y$. The latter is equivalent to $x\le_{\Cal U/\Cal U}y$, as well as to $x\le_{\Cal T'/\Cal T'}y$. In other words, $x$ and $y$ are in the same $\Cal T'$-connected component. Moreover, since $x\le_{\Cal T}y$ we also have $x\le_{\Cal T'}y$ by definition of $\Cal T'$. This proves $\Cal T''\prec \Cal T'$.\\

If $x\le_{\Cal U}y$, it means that $x$ and $y$ are in the same $\Cal U$-connected component, and moreover $x\le_{\Cal T/\Cal T''}y$, because $\Cal U\oprec \Cal T/\Cal T''$. By definition of the transitive closure, there exist $a_1,\ldots, a_p$ and $
b_1,\ldots,b_p$ in $X$ such that 
\begin{equation}\label{TsurTsec}
x\le_{\Cal T} a_1,\,b_1\le_{\Cal T}a_2,\ldots, b_p\le_{\Cal T}y
\end{equation}
and $a_i\ge_{\Cal T''} b_i$ for $i=1,\ldots ,p$. In particular, $a_i\sim_{\Cal T/\Cal T''} b_i$, hence:
$$x\sim_{\Cal T/\Cal T''} a_1\sim_{\Cal T/\Cal T''}b_1\sim_{\Cal T/\Cal T''}a_2\sim_{\Cal T/\Cal T''}\cdots\sim_{\Cal T/\Cal T''}b_p\sim_{\Cal T/\Cal T''}y$$
which immediately yields:
$$x\sim_{(\Cal T/\Cal T'')/\Cal U} a_1\sim_{(\Cal T/\Cal T'')/\Cal U}b_1\sim_{(\Cal T/\Cal T'')/\Cal U}a_2\sim_{(\Cal T/\Cal T'')/\Cal U}\cdots\sim_{(\Cal T/\Cal T'')/\Cal U}b_p\sim_{(\Cal T/\Cal T'')/\Cal U}y$$
since $\Cal T/\Cal T''\prec (\Cal T/\Cal T'')/\Cal U$. Now using $\Cal U\oprec\Cal T/\Cal T''$ again, we get
$$x\sim_{\Cal U/\Cal U} a_1\sim_{\Cal U/\Cal U}b_1\sim_{\Cal U/\Cal U}a_2\sim_{\Cal U/\Cal U}\cdots\sim_{\Cal U/\Cal U}b_p\sim_{\Cal U/\Cal U}y.$$
Hence all the chain is included in the same $\Cal U$-connected component. By definition of $\Cal T'$ we can then rewrite \eqref{TsurTsec} as:
\begin{equation}\label{TprimesurTsec}
x\le_{\Cal T'} a_1,\,b_1\le_{\Cal T'}a_2,\ldots, b_p\le_{\Cal T'}y
\end{equation}
with $a_i\ge_{\Cal T''} b_i$ for $i=1,\ldots ,p$, which means $x\le_{\Cal T'/\Cal T''}y$.\\

Conversely, if $x\le_{\Cal T'/\Cal T''}y$, then $x$ and $y$ are in the same $\Cal U$-component according to the definition of $\Cal T'$, and \eqref{TprimesurTsec} implies \eqref{TsurTsec}. Hence $x\le_{\Cal T/\Cal T''}y$, hence $x\le_{\Cal U}y$. We have then:
\begin{equation}\label{uttprime}
\Cal U=\Cal T/\Cal T'.
\end{equation}
To finish the proof, we have to show $\Cal T'\oprec\Cal T$ and $\Cal T''\oprec\Cal T'$. Any $\Cal T'$-connected subset $Y\subset X$ is also $\Cal U$-connected,  hence the restrictions of $\Cal T$ and $\Cal T'$ on $Y$ coincide. Similarly, the restrictions of $\Cal T'$ and $\Cal T''$ on any $\Cal T''$-connected subset coincide. If $x\sim_{\Cal T/\Cal T'}y$, then $x\sim_{(\Cal T/\Cal T'')/(\Cal T'/\Cal T'')}y$, which means $x\sim_{(\Cal T/\Cal T'')/\Cal U}y$, which in turn yields $x\sim_{\Cal U/\Cal U}y$, i.e. $x\sim_{\Cal T'/\Cal T'}y$. Hence $\Cal T'\oprec \Cal T$. Finally, if $x\sim_{\Cal T'/\Cal T''}y$, then $x\sim_{\Cal T/\Cal T''}y$, hence $x\sim_{\Cal T''/\Cal T''}y$, which yields $\Cal T''\oprec \Cal T'$. This ends up the proof of Proposition \ref{transit}.
\end{proof}
\section{Algebraic structures on finite topologies}\label{sect:alg}
The collection of all finite topological spaces shows very rich algebraic features, best viewed in the linear species formalism. We  describe a commutative product, an "internal" coproduct and an "external" coproduct, as well as the interactions between them.
\subsection{The coalgebra species of finite topological spaces}\label{sect:espece-cogebre}
Recall that a linear species is a contravariant functor from the category of finite sets with bijections into the category of vector spaces (on some field $K$). The species $\mathbb T$ of topological spaces is defined as follows: $\mathbb T_X$ is the vector space freely generated by the topologies on $X$. For any bijection $\varphi:X\longrightarrow X'$, the isomorphism $\mathbb T_{\varphi}:\mathbb T_{X'}\longrightarrow \mathbb T_X$ is defined by the obvious relabelling:
$$\mathbb T_{\varphi}(\Cal T):=\{\varphi^{-1}(Y),\,Y\in\Cal T\}$$
for any topology $\Cal T$ on $X'$. For any finite set $X$, let us introduce the coproduct $\Gamma$ on $\mathbb T_X$ defined as follows:
\begin{equation}
\Gamma(\Cal T)=\sum_{{\Cal T'}\soprec{\Cal T}}\Cal T'\otimes \Cal T/\Cal T'.
\end{equation}

{\bf Examples.} If $X=E\sqcup F=A\sqcup A\sqcup C$ are two partitions of $X$:
\begin{align*}
\Gamma(\tdun{$X$})&=\tdun{$X$}\otimes \tdun{$X$},\\
\Gamma(\tddeux{$E$}{$F$})&=\tddeux{$E$}{$F$}\otimes \tdun{$X$}+\tdun{$E$}\tdun{$F$}\otimes \tddeux{$E$}{$F$}\\
\Gamma(\tdun{$E$}\tdun{$F$})&=\tdun{$E$}\tdun{$F$}\otimes \tdun{$E$}\tdun{$F$}\\
\Gamma(\tdtroisun{$A$}{$C$}{$B$})&=\tdtroisun{$A$}{$C$}{$B$}\otimes \tdun{$X$}+\tddeux{$A$}{$B$}\tdun{$C$}\otimes \tddeux{$A\sqcup B$}{$C$}
\hspace{.5cm}+\tddeux{$A$}{$C$}\tdun{$B$}\otimes \tddeux{$A\cup C$}{$B$}\hspace{.5cm}
+\tdun{$A$}\tdun{$B$}\tdun{$C$}\otimes \tdtroisun{$A$}{$C$}{$B$}\\
\Gamma(\tdtroisdeux{$A$}{$B$}{$C$})&=\tdtroisdeux{$A$}{$B$}{$C$}\otimes \tdun{$X$}+\tddeux{$A$}{$B$}\tdun{$C$}\otimes \tddeux{$A\sqcup B$}{$C$}
\hspace{.5cm}+\tdun{$A$}\tddeux{$B$}{$C$}\otimes \tddeux{$A$}{$B\sqcup C$}\hspace{.5cm}+\tdun{$A$}\tdun{$B$}\tdun{$B$}\otimes 
\tdtroisdeux{$A$}{$B$}{$C$}\\
\Gamma(\pdtroisun{$A$}{$B$}{$C$})&=\pdtroisun{$A$}{$B$}{$C$}\otimes \tdun{$X$}+\tddeux{$B$}{$A$}\tdun{$C$}\otimes \tddeux{$C$}{$A\sqcup B$}
\hspace{.5cm}+\tddeux{$C$}{$A$}\tdun{$B$}\otimes \tddeux{$B$}{$A\sqcup C$}\hspace{.5cm}
+\tdun{$A$}\tdun{$B$}\tdun{$C$}\otimes \pdtroisun{$A$}{$B$}{$C$}\\
\Gamma(\tddeux{$A$}{$B$}\tdun{$C$})&=\tddeux{$A$}{$B$}\tdun{$C$}\otimes \tdun{$A\sqcup B$}\hspace{.5cm}\tdun{$C$}
+\tdun{$A$}\tdun{$B$}\tdun{$C$}\otimes \tddeux{$A$}{$B$}\tdun{$C$}\\
\Gamma(\tdun{$A$}\tdun{$B$}\tdun{$C$})&=\tdun{$A$}\tdun{$B$}\tdun{$C$}\otimes \tdun{$A$}\tdun{$B$}\tdun{$C$}
\end{align*}

\begin{thm}\label{main}
The coproduct $\Gamma$ is coassociative.
\end{thm}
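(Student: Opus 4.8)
The plan is to verify the identity $(\Gamma\otimes\mathrm{id})\circ\Gamma=(\mathrm{id}\otimes\Gamma)\circ\Gamma$ on a fixed topology $\Cal T$ on $X$, by writing out both sides as triple sums and exhibiting an index-set bijection that matches the summands one by one. Expanding the left-hand side and using the definition of $\Gamma$ twice gives
\begin{equation*}
(\Gamma\otimes\mathrm{id})\Gamma(\Cal T)=\sum_{\Cal T'\soprec\Cal T}\ \sum_{\Cal T''\soprec\Cal T'}\Cal T''\otimes(\Cal T'/\Cal T'')\otimes(\Cal T/\Cal T'),
\end{equation*}
whereas the right-hand side reads
\begin{equation*}
(\mathrm{id}\otimes\Gamma)\Gamma(\Cal T)=\sum_{\Cal T''\soprec\Cal T}\ \sum_{\Cal U\soprec\Cal T/\Cal T''}\Cal T''\otimes\Cal U\otimes\bigl((\Cal T/\Cal T'')/\Cal U\bigr).
\end{equation*}
So it is enough to produce a bijection between the set of chains $\Cal T''\oprec\Cal T'\oprec\Cal T$ and the set of pairs $(\Cal T'',\Cal U)$ with $\Cal T''\oprec\Cal T$ and $\Cal U\oprec\Cal T/\Cal T''$, which preserves $\Cal T''$ and sends the triple $(\Cal T''\,,\,\Cal T'/\Cal T''\,,\,\Cal T/\Cal T')$ to the triple $(\Cal T''\,,\,\Cal U\,,\,(\Cal T/\Cal T'')/\Cal U)$.

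First I would reorganize the left-hand double sum as $\sum_{\Cal T''\oprec\Cal T}\sum_{\Cal T'}$ ranging over $\Cal T'$ with $\Cal T''\oprec\Cal T'\oprec\Cal T$: this is legitimate because $\oprec$ is transitive (Proposition \ref{transitivite}), so every $\Cal T''$ occurring on the left satisfies $\Cal T''\oprec\Cal T$, and conversely every $\Cal T''\oprec\Cal T$ occurs (take $\Cal T'=\Cal T$, recalling that $\Cal T$ is $\Cal T$-admissible). The outer index is then the same $\{\Cal T'':\Cal T''\oprec\Cal T\}$ on both sides, so it suffices to work with $\Cal T''$ fixed. For fixed $\Cal T''$, Proposition \ref{transit} is exactly the statement that $\Cal T'\mapsto\Cal T'/\Cal T''=:\Cal U$ is a bijection from $\{\Cal T':\Cal T''\oprec\Cal T'\oprec\Cal T\}$ onto $\{\Cal U:\Cal U\oprec\Cal T/\Cal T''\}$; the admissibility relation $\Cal T'/\Cal T''\oprec\Cal T/\Cal T''$ implicit here is the Lemma immediately preceding Proposition \ref{transit}.

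It then remains to match the three tensor legs under this bijection. The first leg is $\Cal T''$ on both sides and is left untouched. The second leg is $\Cal T'/\Cal T''$ on the left and $\Cal U$ on the right, and these agree by the very definition of the bijection. For the third leg, I would apply Lemma \ref{shrink} to the chain $\Cal T''\prec\Cal T'\prec\Cal T$, which gives
\begin{equation*}
\Cal T/\Cal T'=(\Cal T/\Cal T'')\big/(\Cal T'/\Cal T'')=(\Cal T/\Cal T'')\big/\Cal U,
\end{equation*}
i.e. exactly the third leg on the right. Summing over all $\Cal T''\oprec\Cal T$ then yields the desired equality of the two triple tensors, and hence coassociativity of $\Gamma$.

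I do not expect a genuine obstacle here: all the substantive work — the bijectivity of $\Cal T'\mapsto\Cal T'/\Cal T''$ between the relevant sets of admissible refinements, and the "shrinking" identity for iterated quotients — has already been isolated as Proposition \ref{transit} and Lemma \ref{shrink}. The only thing needing care is the bookkeeping: making sure that the domain and codomain of the bijection in Proposition \ref{transit} coincide precisely with the two index sets above (in particular that admissibility, and not merely refinement, is the relation being quantified over in every summation), so that the reindexing is exact and the match of the three legs is complete.
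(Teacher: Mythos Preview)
Your proposal is correct and follows essentially the same approach as the paper: expand both iterated coproducts, then match the two triple sums using transitivity of $\oprec$ (Proposition~\ref{transitivite}), the bijection $\Cal T'\mapsto\Cal T'/\Cal T''$ of Proposition~\ref{transit}, and the shrinking identity of Lemma~\ref{shrink}. The paper's proof is just the terse one-line citation of these three results; you have spelled out the bookkeeping (in particular the reorganization of the outer sum and the matching of each tensor leg) that the paper leaves implicit.
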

\begin{proof}
For any topology $\Cal T$ on $X$ we have:
\begin{equation}
(\Gamma\otimes\mop{Id})\Gamma(\Cal T)=\sum_{\Cal T''\soprec\Cal T'\soprec\Cal T}\Cal T''\otimes \Cal T'/\Cal T''\otimes\Cal T/\Cal T',
\end{equation}
whereas
\begin{equation}
(\mop{Id}\otimes\Gamma)\Gamma(\Cal T)=\sum_{\Cal T''\soprec\Cal T'}\hskip 3mm \sum_{\Cal U\soprec \Cal T'/\Cal T''}\Cal T''\otimes\Cal U\otimes(\Cal T/\Cal T'')\big/\Cal U.
\end{equation}
The result then comes from Lemmas \ref{transitivite} and \ref{shrink}, and from Proposition \ref{transit}.
\end{proof}
The group-like elements of $\mathbb T_ X$ are the topologies $\Cal T$ such that for any connected component $Y$ of $\Cal T$,
$\Cal T_{\mid Y}$ is coarse: in, other words, $\Cal T$ is group-like if, and only if, $\leq_{\Cal T}$ is an equivalence.
For any topology $\Cal T$ on $X$, ther exists a unique group-like topology $\Cal T'\oprec \Cal T$, namely
the group-like topology $\Cal T'$ such that $\leq_{\Cal T'}=\sim_\Cal T$; moreover, $\Cal T/\Cal T'=\Cal T$.
The unique topology $\Cal T''$ such that $\Cal T/\Cal T''$ is group-like is $\Cal T''=\Cal T$. Hence, linear form $\varepsilon_X$ on $\mathbb T_X$ 
defined by $\varepsilon_X(\Cal T)=1$ if $\Cal T$ is group-like and $\varepsilon(\Cal T)=0$ otherwise is a counit.

The involution $\Cal T\mapsto \overline{\Cal T}$ obviously extends linearly to a coalgebra involution on $\mathbb T_X$. Any relabelling induces an involutive coalgebra isomorphism in a functorial way. To summarize:
\begin{cor}
$\mathbb T$ is a species is the category of counital connected coalgebras with involution.
\end{cor}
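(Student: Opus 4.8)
The plan is to observe that every ingredient required by the statement has already been constructed, so the proof is a matter of assembling them. Fix a finite set $X$. That $(\mathbb T_X,\Gamma,\varepsilon_X)$ is a counital coalgebra is nothing but Theorem~\ref{main} for coassociativity, together with the counit axioms $(\varepsilon_X\otimes\mop{Id})\Gamma=\mop{Id}=(\mop{Id}\otimes\varepsilon_X)\Gamma$, which follow from the two uniqueness observations recorded just above the statement: for each topology $\Cal T$ there is a unique group-like $\Cal T'\soprec\Cal T$ and it satisfies $\Cal T/\Cal T'=\Cal T$, which gives the left counit; dually, the unique $\Cal T''$ with $\Cal T/\Cal T''$ group-like is $\Cal T''=\Cal T$, and $\Cal T\soprec\Cal T$ with $\Cal T/\Cal T$ group-like, which gives the right counit. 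Connectedness is immediate at the species level: $\emptyset$ carries a single topology, so $\mathbb T_\emptyset=K$, spanned by a group-like element.

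Next I would deal with the involution. Write $\iota_X$ for the linear extension of $\Cal T\mapsto\overline{\Cal T}$; since $\overline{\overline{\Cal T}}=\Cal T$ it is an involution of $\mathbb T_X$. It is a coalgebra endomorphism: the identity $\Cal T'\oprec\Cal T\Leftrightarrow\overline{\Cal T'}\oprec\overline{\Cal T}$ (noted right after the definition of $\oprec$) together with $\overline{\Cal T/\Cal T'}=\overline{\Cal T}\big/\overline{\Cal T'}$ from \eqref{quotient-inv} gives, after reindexing the sum defining $\Gamma(\overline{\Cal T})$ by $\Cal S=\overline{\Cal T'}$, the relation $\Gamma\circ\iota_X=(\iota_X\otimes\iota_X)\circ\Gamma$. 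It is compatible with the counit, $\varepsilon_X\circ\iota_X=\varepsilon_X$, because passing to the dual topology preserves the connected components and turns a coarse topology into a coarse one, hence preserves the class of group-like elements.

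Finally, functoriality. For a bijection $\varphi\colon X\to X'$ the relabelling $\mathbb T_\varphi\colon\mathbb T_{X'}\to\mathbb T_X$ is a linear isomorphism, and it intertwines $\Gamma$, $\varepsilon$ and $\iota$ because the refinement relation $\prec$, its admissible restriction $\oprec$, the quotient $\Cal T/\Cal T'$, the equivalence $\sim_{\Cal T}$ and the property of being group-like are all defined purely through the associated quasi-orders $\le_{\Cal T}$, which transport bijectively along $\varphi$; compatibility with $\iota$ also follows directly from the definition of $\mathbb T_\varphi$ and of the dual topology. Since $\mathbb T_{\psi\varphi}=\mathbb T_\varphi\circ\mathbb T_\psi$ and $\mathbb T_{\mop{Id}}=\mop{Id}$, this displays $\mathbb T$ as a contravariant functor from finite sets with bijections into the category of counital connected coalgebras with involution.

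I do not expect a genuine obstacle: all the substance already lies in Lemmas~\ref{shrink}, \ref{classes} and \ref{composantes-connexes}, Propositions~\ref{transitivite} and \ref{transit}, and Theorem~\ref{main}. The only point deserving a moment's attention is the compatibility of the involution and of the relabellings with $\oprec$ and with the quotient, and both become transparent once one notices that every construction involved is phrased in terms of the quasi-orders $\le_{\Cal T}$ and of the single identity \eqref{quotient-inv}.
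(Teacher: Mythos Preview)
Your proposal is correct and follows exactly the paper's approach: the paper treats the corollary as a mere summary (``To summarize:''), relying on Theorem~\ref{main} for coassociativity, the preceding paragraph for the counit, and the sentences about the involution $\Cal T\mapsto\overline{\Cal T}$ and relabellings for the remaining structure---you simply spell out these verifications in full. One small caveat: your reading of ``connected'' as the species-level condition $\mathbb T_\emptyset=K$ is reasonable, but note that the paper's phrasing literally asks each $\mathbb T_X$ to be a connected coalgebra, which is delicate since $\mathbb T_X$ has several group-like elements for $|X|\ge 2$; the paper does not resolve this ambiguity either, so your interpretation is as good as any.
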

A commutative associative product on finite topologies is defined as follows: for any pair $X_1,X_2$ of finite sets we introduce 
\begin{eqnarray*}
m:\mathbb T_{X_1}\otimes\mathbb T_{X_2}&\longrightarrow& \mathbb T_{X_1\sqcup X_2}\\
\Cal T_1\otimes\Cal T_2 &\longmapsto \Cal T_1\Cal T_2,
\end{eqnarray*}
where $\Cal T_1\Cal T_1$ is characterized by $Y\in\Cal T_1\Cal T_2$ if and only if $Y\cap X_1\in\Cal T_1$ and $Y\cap X_2\in\Cal T_2$.
\begin{prop}\label{mult-un}
The species coproduct $\Gamma$ and the product are compatible, i.e. for any pair $X_1,X_2$ of finite sets the following diagram commutes:
\diagramme{
\xymatrix{
\mathbb T_{X_1}\otimes\mathbb T_{X_2}\ar [rrrr]^m\ar[d]_{\Gamma\otimes\Gamma}
&&&& \mathbb T_{X_1\sqcup X_2}\ar[d]^{\Gamma}\\
\mathbb T_{X_1}\otimes\mathbb T_{X_1}\otimes\mathbb T_{X_2}\otimes\mathbb T_{X_2}
\ar[drr]_{\tau^{2,3}} &&&&\mathbb T_{X_1\sqcup X_2}\otimes\mathbb T_{X_1\sqcup X_2}\\
&& \mathbb T_{X_1}\otimes\mathbb T_{X_2}\otimes\mathbb T_{X_1}\otimes\mathbb T_{X_2}
\ar[urr]_{m\otimes m}&&
}
}
\end{prop}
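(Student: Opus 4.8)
The plan is to unwind both composite maps around the diagram on a decomposable tensor $\Cal T_1\otimes\Cal T_2 \in \mathbb T_{X_1}\otimes\mathbb T_{X_2}$ and check that the two resulting elements of $\mathbb T_{X_1\sqcup X_2}\otimes\mathbb T_{X_1\sqcup X_2}$ agree. Going right-then-down gives $\Gamma(\Cal T_1\Cal T_2)=\sum_{\Cal S\soprec \Cal T_1\Cal T_2}\Cal S\otimes (\Cal T_1\Cal T_2)/\Cal S$; going down-then-across gives $(m\otimes m)\tau^{2,3}\big(\sum_{\Cal T_1'\soprec\Cal T_1}\sum_{\Cal T_2'\soprec\Cal T_2}\Cal T_1'\otimes(\Cal T_1/\Cal T_1')\otimes\Cal T_2'\otimes(\Cal T_2/\Cal T_2')\big)=\sum_{\Cal T_1'\soprec\Cal T_1,\ \Cal T_2'\soprec\Cal T_2}\Cal T_1'\Cal T_2'\otimes (\Cal T_1/\Cal T_1')(\Cal T_2/\Cal T_2')$. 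So the claim reduces to a bijection between the indexing sets together with an identity on the tensor factors.

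The key structural fact is that a topology $\Cal T=\Cal T_1\Cal T_2$ on $X=X_1\sqcup X_2$ is precisely a topology whose connected components are each contained in $X_1$ or in $X_2$; equivalently $\le_{\Cal T}$ never relates an element of $X_1$ to an element of $X_2$. First I would show that $\Cal S\prec\Cal T_1\Cal T_2$ with $\Cal S$ being $\Cal T_1\Cal T_2$-admissible forces $\Cal S=\Cal S_1\Cal S_2$ with $\Cal S_i\prec\Cal T_i$ and $\Cal S_i$ being $\Cal T_i$-admissible. For this: since $\Cal S\prec\Cal T_1\Cal T_2$, the quasi-order $\le_{\Cal S}$ is finer, hence also never crosses between $X_1$ and $X_2$, so $\Cal S=\Cal S_1\Cal S_2$ where $\Cal S_i:=\Cal S\restr{X_i}$; and $\Cal S_i\prec\Cal T_i$ is immediate. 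The two admissibility axioms then split along the two blocks: a subset $Y$ connected for $\Cal S$ lies inside one $X_i$ and is connected for $\Cal S_i$, on which $\Cal S\restr Y=\Cal S_i\restr Y$ and $(\Cal T_1\Cal T_2)\restr Y=\Cal T_i\restr Y$; and because neither $\le_{\Cal T_1\Cal T_2}$ nor $\le_{\Cal S}$ crosses blocks, one checks from the definition of the quotient that $\Cal R$ (and hence its transitive closure) decomposes, giving $(\Cal T_1\Cal T_2)/(\Cal S_1\Cal S_2)=(\Cal T_1/\Cal S_1)(\Cal T_2/\Cal S_2)$ and likewise for $\Cal S/\Cal S$, so that $\sim_{(\Cal T_1\Cal T_2)/\Cal S}$ coincides with $\sim_{\Cal S/\Cal S}$ iff the analogous equivalence holds on each block. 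Conversely, if $\Cal S_i\soprec\Cal T_i$ for $i=1,2$, the same decomposition of quotients shows $\Cal S_1\Cal S_2\soprec\Cal T_1\Cal T_2$. This establishes the desired bijection $\Cal S\leftrightarrow(\Cal S_1,\Cal S_2)$ between summation indices, and under it $\Cal S=\Cal S_1\Cal S_2$ and $(\Cal T_1\Cal T_2)/\Cal S=(\Cal T_1/\Cal S_1)(\Cal T_2/\Cal S_2)$, exactly matching the two sides.

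The main obstacle is the bookkeeping around the quotient construction: verifying that the transitive closure defining $\le_{\Cal T/\Cal S}$ genuinely respects the block decomposition (no chain $x\Cal R a_1\Cal R\cdots\Cal R a_p\Cal R y$ can wander between $X_1$ and $X_2$), and then tracking the second admissibility axiom — the coincidence of $\sim_{\Cal T/\Cal S}$ with $\sim_{\Cal S/\Cal S}$ — carefully through this decomposition. Once the non-crossing property is nailed down, everything else is a routine comparison of the two summations factor by factor. I would present the non-crossing lemma for products first, then deduce $\Cal S=\Cal S_1\Cal S_2$, then the compatibility of $/$ with $m$, then conclude.
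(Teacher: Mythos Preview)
Your proposal is correct and follows essentially the same route as the paper: both reduce the compatibility to the bijection $\Cal S\leftrightarrow(\Cal S_1,\Cal S_2)$ between $\{\Cal S\soprec\Cal T_1\Cal T_2\}$ and $\{\Cal S_1\soprec\Cal T_1\}\times\{\Cal S_2\soprec\Cal T_2\}$, together with the quotient identity $(\Cal T_1\Cal T_2)/(\Cal S_1\Cal S_2)=(\Cal T_1/\Cal S_1)(\Cal T_2/\Cal S_2)$. The paper's proof simply asserts these two facts and then writes out the resulting equality of sums, whereas you supply the supporting details (the non-crossing of $\le_{\Cal S}$, the block decomposition of the relation $\Cal R$ defining the quotient, and the reduction of both admissibility axioms to the blocks); so your write-up is a more fleshed-out version of the same argument rather than a different one.
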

\begin{proof}
Let $\Cal T_1$, resp. $\Cal T_2$ be a topology on $X_1$, resp. $X_2$. Let $\Cal U_1\oprec\Cal T_1$ and $\Cal U_2\oprec\Cal T_2$. Then $\Cal U_1\Cal U_2\oprec\Cal T_1\Cal T_2$. Conversely, any topology $\Cal U$ on $X_1\sqcup X_2$ such that $\Cal U\oprec\Cal T_1\Cal T_2$ can be written $\Cal U_1\Cal U_2$ with $\Cal U_i=\Cal U\restr{X_i}$ for $i=1,2$, and we have $\Cal U_i\oprec\Cal T_i$. We have then:
\begin{eqnarray*}
\Gamma(\Cal T_1\Cal T_2)&=&\sum_{\Cal U\soprec\Cal T_1\Cal T_2}\Cal U\otimes (\Cal T_1\Cal T_2)/\Cal U\\
&=&\sum_{{\scriptstyle\Cal U_1\soprec\Cal T_1\atop \scriptstyle\Cal U_2\soprec\Cal T_2}}
\Cal U_1\Cal U_2\otimes(\Cal T_1/\Cal U_1)(\Cal T_2/\Cal U_2)\\
&=&\Gamma(\Cal T_1)\Gamma(\Cal T_2).
\end{eqnarray*}
\end{proof}
Finally, recall that the group-like elements in $\mathbb T_X$ are precisely the topologies $\Cal T_\Cal P$  where $\Cal P$ is a partition of $X$, defined as the product of the coarse topologies on each block of $\Cal P$. This suggests a grading on $\mathbb T_X$: we introduce $d(\Cal T)$ as the number of equivalence classes minus the number of connected components of $\Cal T$. It is easy to see that this grading makes $(\mathbb T_X, \Gamma)$ a finite-dimensional graded coalgebra. The degree zero topologies are the group-like ones, i.e. the products of coarse topologies described above, and the maximum possible degree $|X|-1$ is reached for connected $\Cal T_0$ topologies.
\subsection{The external coproduct}
For any topology $\Cal T$ on a finite set $X$ and for any subset $Y\subset X$, we denote by $\Cal T\restr{Y}$ the restriction of $\Cal T$ to $Y$. It is defined by:
$$\Cal T\restr{Y}=\{Z\cap Y,\,Z\in\Cal T\}.$$
Restriction and taking quotients commute: for any subset $Y\subset X$ and for any $\Cal T'\oprec\Cal T$ we have $\Cal T'\restr{Y}\oprec\Cal T\restr{Y}$ and:
\begin{equation}\label{restr-quotient}
(\Cal T/\Cal T')\restr{Y}=\Cal T\restr Y\big/\Cal T'\restr Y.
\end{equation}
The external coproduct is defined on $\mathbb T_X$ as follows:
\begin{eqnarray*}
\Delta:\mathbb T_X&\longrightarrow& \bigoplus_{Y\subset X}\mathbb T_{X\setminus Y}\otimes\mathbb T_{Y}\\
\Cal T&\longmapsto& \sum_{Y\in\Cal T}\Cal T\restr{X\setminus Y}\otimes\Cal T\restr{Y}.
\end{eqnarray*}
\begin{prop}
The external coproduct is coassociative and multiplicative, i.e. the two following diagrams commute:
\diagramme{
\xymatrix{
\mathbb T_{X}\ar [rrr]^\Delta\ar[d]_{\Delta}
&&& \displaystyle\bigoplus_{Y\subset X}\mathbb T_{X\setminus Y}\otimes\mathbb T_{Y} \ar[d]^{I\otimes\Delta}\\
\displaystyle\bigoplus_{Z\subset X}\mathbb T_{X\setminus Z}\otimes\mathbb T_{Z}
\ar[rrr]_{\Delta\otimes I} &&&\displaystyle\bigoplus_{Z\subset Y\subset X}\mathbb T_{X\setminus Y}\otimes\mathbb T_{Y\setminus Z}\otimes\mathbb T_Z
}
}
and
\diagramme{
\xymatrix{
\mathbb T_{X_1}\otimes\mathbb T_{X_2}\ar [rr]^m\ar[d]_{\Delta\otimes\Delta}
&& \mathbb T_{X_1\sqcup X_2}\ar[d]^{\Delta}\\
\displaystyle\bigoplus_{{\scriptstyle Y_1\subset X_1\atop \scriptstyle Y_2\subset X_2}}
\mathbb T_{X_1\setminus Y_1}\otimes\mathbb T_{Y_1}\otimes\mathbb T_{X_2\setminus Y_2}\otimes\mathbb T_{Y_2}
\ar[dr]_{\tau^{2,3}} &&\displaystyle\bigoplus_{Y\subset X_1\sqcup X_2}\mathbb T_{(X_1\sqcup X_2)\setminus Y}\otimes\mathbb T_{Y}\\
& \displaystyle\bigoplus_{{\scriptstyle Y_1\subset X_1\atop \scriptstyle Y_2\subset X_2}}
\mathbb T_{X_1\setminus Y_1}\otimes\mathbb T_{X_2\setminus Y_2}\otimes\mathbb T_{Y_1}\otimes\mathbb T_{Y_2}
\ar[ur]_{m\otimes m}&
}
}
\end{prop}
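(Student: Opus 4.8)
The plan is to verify both squares by computing each composite explicitly on a topology $\Cal T$ on $X$ (respectively on a decomposable tensor $\Cal T_1\otimes\Cal T_2$) and matching the two resulting sums index by index; all of this reduces to a few elementary facts about restriction that I would record first. (1) If $Y\in\Cal T$, the open subsets of $\Cal T\restr{Y}$ are exactly the open subsets of $\Cal T$ contained in $Y$: for $Z\in\Cal T$ one has $Z\cap Y\in\Cal T$ and $Z\cap Y\subseteq Y$, while an open subset of $\Cal T$ contained in $Y$ equals its own trace on $Y$. (2) Restriction is transitive, $(\Cal T\restr{Y})\restr{Z}=\Cal T\restr{Z}$ for any $Z\subseteq Y\subseteq X$, with no openness hypothesis. (3) If $Z\in\Cal T$, then $W\mapsto Z\sqcup W$ is a bijection from the open subsets of $\Cal T\restr{X\setminus Z}$ onto the open subsets of $\Cal T$ containing $Z$, because $Z\cup V\in\Cal T$ whenever $V\in\Cal T$ and $(Z\cup V)\cap(X\setminus Z)=V\cap(X\setminus Z)$. (4) For $Z_i\subseteq X_i$ one has $(\Cal T_1\Cal T_2)\restr{Z_1\sqcup Z_2}=(\Cal T_1\restr{Z_1})(\Cal T_2\restr{Z_2})$, and the open subsets of $\Cal T_1\Cal T_2$ are precisely the sets $Y_1\sqcup Y_2$ with $Y_i\in\Cal T_i$; both are immediate from the definitions.

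For coassociativity I would expand $(I\otimes\Delta)\Delta(\Cal T)=\sum_{Y\in\Cal T}\Cal T\restr{X\setminus Y}\otimes\Delta(\Cal T\restr{Y})$ and use facts (1) and (2) to see that the inner $\Delta$ is summed over open subsets $Z$ of $\Cal T$ with $Z\subseteq Y$ and replaces $\Cal T\restr{Y}$ by $\Cal T\restr{Y\setminus Z}\otimes\Cal T\restr{Z}$; this gives $\sum\Cal T\restr{X\setminus Y}\otimes\Cal T\restr{Y\setminus Z}\otimes\Cal T\restr{Z}$, the sum being over pairs of open subsets $Z\subseteq Y$ of $\Cal T$. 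For the other composite, $(\Delta\otimes I)\Delta(\Cal T)=\sum_{Z\in\Cal T}\Delta(\Cal T\restr{X\setminus Z})\otimes\Cal T\restr{Z}$, and facts (3) and (2) rewrite the inner $\Delta$ as a sum over open $Y\supseteq Z$ contributing $\Cal T\restr{X\setminus Y}\otimes\Cal T\restr{Y\setminus Z}$ (here one checks $(X\setminus Z)\setminus(Y\setminus Z)=X\setminus Y$ using $Z\subseteq Y$). So the second composite is the same sum over the same index set of chains $Z\subseteq Y$, and the first diagram commutes.

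For multiplicativity I would use fact (4): writing an open subset of $\Cal T_1\Cal T_2$ as $Y_1\sqcup Y_2$ with $Y_i\in\Cal T_i$, and using $(X_1\sqcup X_2)\setminus(Y_1\sqcup Y_2)=(X_1\setminus Y_1)\sqcup(X_2\setminus Y_2)$ together with the compatibility of restriction with the product, one gets $\Delta(\Cal T_1\Cal T_2)=\sum_{Y_1\in\Cal T_1,\,Y_2\in\Cal T_2}(\Cal T_1\restr{X_1\setminus Y_1})(\Cal T_2\restr{X_2\setminus Y_2})\otimes(\Cal T_1\restr{Y_1})(\Cal T_2\restr{Y_2})$. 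On the other hand, expanding $\Delta(\Cal T_1)\otimes\Delta(\Cal T_2)$, transporting by $\tau^{2,3}$ and applying $m\otimes m$ gives precisely this sum, so the second diagram commutes.

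I do not expect a genuine obstacle here: the whole content is an identification of index sets. The step that deserves the most care is fact (3) — the bijection between the open subsets of $\Cal T\restr{X\setminus Z}$ and the open subsets of $\Cal T$ containing the open set $Z$ — since it is exactly what makes the two iterated sums in the coassociativity computation run over the same set of chains $Z\subseteq Y$; once it is stated cleanly, everything else is routine bookkeeping.
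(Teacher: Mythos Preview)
Your proof is correct and follows essentially the same route as the paper's. The paper identifies the two iterated coproducts via the bijection $(\wt Y,Z)\mapsto \wt Y\sqcup Z$ between pairs $(Z\in\Cal T,\ \wt Y\in\Cal T\restr{X\setminus Z})$ and chains $Z\subset Y$ of elements of $\Cal T$, which is exactly your fact~(3); and it disposes of multiplicativity in one line by appeal to the definition of $\Cal T_1\Cal T_2$, which you unpack as your fact~(4). Your additional facts~(1) and~(2) just make explicit the restriction identities the paper leaves implicit.
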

\begin{proof}
we have:
\begin{equation}
(\Delta\otimes I)\Delta(\Cal T)=\bigoplus_{Z\in\Cal T,\, \wt Y\in\Cal T\srestr{X\setminus Z}}\Cal T\restr{X\setminus{Z\sqcup \wt Y}}\otimes\Cal T\restr{\wt Y}\otimes \Cal T\restr{Z}
\end{equation}
and
\begin{equation}
(I\otimes\Delta)\Delta(\Cal T)=\bigoplus_{Y,Z\in\Cal T,\, Z\subset Y}\Cal T\restr{X\setminus Y}\otimes\Cal T\restr{Y\setminus Z}\otimes \Cal T\restr{Z}
\end{equation}
Coassociativity then comes from the obvious fact that $(\wt Y,Z)\mapsto\wt Y\sqcup Z$ is a bijection from the set of pairs $(\wt Y,Z)$ with $Z\in\Cal T$ and $\wt Y\in\Cal T\restr{X\setminus Z}$, onto the set of pairs $(Y,Z)$ of elements of $\Cal T$ subject to $Z\subset Y$. The inverse map is given by $(Y,Z)\mapsto (Y\cap X\setminus Z,Z)$. The multiplicativity property $\Delta(\Cal T_1\Cal T_2)=\Delta(\Cal T_1)\Delta(\Cal T_2)$ comes straightforwardly from the very definition of the topology $\Cal T_1\Cal T_2$ on the disjoint union $X_1\sqcup X_2$.
\end{proof}
\begin{thm}
The internal and external coproducts are compatible, in the sense that the following diagram commutes for any finite set $X$:
\diagramme{
\xymatrix{
\mathbb T_X\ar[rr]^{\Gamma}\ar[d]_{\Delta} &&
\mathbb T_X\otimes\mathbb T_X\ar[d]^{I\otimes\Delta}\\
\displaystyle\bigoplus_{Y\subset X}\mathbb T_{X\setminus Y}\otimes \mathbb T_Y\ar[dr]_{\Gamma\otimes\Gamma}&&
\displaystyle\bigoplus_{Y\subset X}\mathbb T_X\otimes\mathbb T_{X\setminus Y}\otimes \mathbb T_Y\\
&\displaystyle\bigoplus_{Y\subset X}\mathbb T_{X\setminus Y}\otimes \mathbb T_{X\setminus Y}\otimes\mathbb T_{Y}\otimes \mathbb T_Y\ar[ur]_{m^{1,3}}&
}
}
\end{thm}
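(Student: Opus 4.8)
The plan is to prove that the two composites around the diagram agree by expanding each of them as an explicit sum and matching the summands through a bijection of index sets. Going across the top and down the right, one obtains, for a topology $\Cal T$ on $X$,
\[
(I\otimes\Delta)\,\Gamma(\Cal T)=\sum_{\Cal T'\soprec\Cal T}\ \sum_{Y\in\Cal T/\Cal T'}\Cal T'\otimes(\Cal T/\Cal T')\restr{X\setminus Y}\otimes(\Cal T/\Cal T')\restr{Y},
\]
whereas going down, then applying $\Gamma\otimes\Gamma$ and finally $m^{1,3}$, one obtains
\[
m^{1,3}(\Gamma\otimes\Gamma)\,\Delta(\Cal T)=\sum_{Y\in\Cal T}\ \sum_{{\scriptstyle\Cal U_1\soprec\Cal T\restr{X\setminus Y}\atop\scriptstyle\Cal U_2\soprec\Cal T\restr{Y}}}\Cal U_1\Cal U_2\otimes\bigl(\Cal T\restr{X\setminus Y}/\Cal U_1\bigr)\otimes\bigl(\Cal T\restr{Y}/\Cal U_2\bigr).
\]
So it suffices to produce a bijection between the set of pairs $(\Cal T',Y)$ with $\Cal T'\soprec\Cal T$ and $Y\in\Cal T/\Cal T'$, and the set of triples $(Y,\Cal U_1,\Cal U_2)$ with $Y\in\Cal T$, $\Cal U_1\soprec\Cal T\restr{X\setminus Y}$ and $\Cal U_2\soprec\Cal T\restr{Y}$, under which the three tensor factors on each side correspond.

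The bijection I would use sends $(\Cal T',Y)$ to $(Y,\Cal T'\restr{X\setminus Y},\Cal T'\restr{Y})$ --- a valid triple because $\Cal T'\soprec\Cal T$ entails $\Cal T'\restr{Y}\soprec\Cal T\restr{Y}$ and likewise over $X\setminus Y$, as recorded just before \eqref{restr-quotient} --- with candidate inverse $(Y,\Cal U_1,\Cal U_2)\mapsto(\Cal U_1\Cal U_2,Y)$. I expect the main work to lie in the structural lemma: \emph{if $\Cal T'\soprec\Cal T$ and $Y\in\Cal T/\Cal T'$, then $Y\in\Cal T$, and $\Cal T'$ has no relation crossing between $Y$ and $X\setminus Y$, so that $\Cal T'=(\Cal T'\restr{X\setminus Y})\cdot(\Cal T'\restr{Y})$.} That $Y\in\Cal T$ is immediate, since $\leq_{\Cal T}\subseteq\leq_{\Cal T/\Cal T'}$ (the latter being the transitive closure of a relation containing $\leq_{\Cal T}$), whence every open set of $\Cal T/\Cal T'$ is an open set of $\Cal T$. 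For the absence of crossing relations I would argue by contradiction, using that $Y$ is a final segment for both $\leq_{\Cal T}$ and $\leq_{\Cal T/\Cal T'}$: a relation $y\leq_{\Cal T'}z$ with $y\in Y$ and $z\notin Y$ would give $y\leq_{\Cal T}z$ because $\Cal T'\prec\Cal T$, contradicting that $Y$ is $\leq_{\Cal T}$-final; and a relation $z\leq_{\Cal T'}y$ with $z\notin Y$ and $y\in Y$ would give $y\leq_{\Cal T/\Cal T'}z$ straight from the definition of the relation $\Cal R$ generating $\leq_{\Cal T/\Cal T'}$, contradicting that $Y$ is $\leq_{\Cal T/\Cal T'}$-final. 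In the absence of crossing relations $\leq_{\Cal T'}$ is the disjoint union of its restrictions to $X\setminus Y$ and to $Y$, which is exactly the quasi-order of $(\Cal T'\restr{X\setminus Y})\cdot(\Cal T'\restr{Y})$.

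For the reverse direction one checks, given $Y\in\Cal T$, $\Cal U_1\soprec\Cal T\restr{X\setminus Y}$ and $\Cal U_2\soprec\Cal T\restr{Y}$, that $\Cal U_1\Cal U_2\soprec\Cal T$ and $Y\in\Cal T/(\Cal U_1\Cal U_2)$. The engine is the remark that, in the graph of the relation $\Cal R$ generating $\leq_{\Cal T/(\Cal U_1\Cal U_2)}$, no edge leaves $Y$: an edge out of $a\in Y$ comes either from $a\leq_{\Cal T}b$, forcing $b\in Y$ since $Y\in\Cal T$, or from $b\leq_{\Cal U_1\Cal U_2}a$, forcing $b\in Y$ since $\Cal U_1\Cal U_2$ has no crossing relation. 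This yields at once that $Y$ is a final segment for $\leq_{\Cal T/(\Cal U_1\Cal U_2)}$, and that every $\sim_{\Cal T/(\Cal U_1\Cal U_2)}$-class lies entirely in $X\setminus Y$ or in $Y$; on such a part $\Cal R$ restricts to the defining relation of $\Cal T\restr{X\setminus Y}/\Cal U_1$ (resp.\ of $\Cal T\restr{Y}/\Cal U_2$), so the equivalence-class axiom for $\Cal U_1\Cal U_2\prec\Cal T$ reduces to the same axiom for $\Cal U_1\soprec\Cal T\restr{X\setminus Y}$ (resp.\ $\Cal U_2\soprec\Cal T\restr{Y}$), while the remaining admissibility axiom holds because a connected set for $\Cal U_1\Cal U_2$ lies entirely in one part, on which the restrictions of $\Cal U_1\Cal U_2$ and $\Cal T$ agree by admissibility of $\Cal U_i$ and transitivity of restriction. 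Finally, the two maps are mutually inverse --- one way using $(\Cal U_1\Cal U_2)\restr{X\setminus Y}=\Cal U_1$, $(\Cal U_1\Cal U_2)\restr{Y}=\Cal U_2$, the other way using the structural lemma --- and the summands correspond: the first tensor factors match via $\Cal T'=\Cal U_1\Cal U_2$, and the last two via the restriction/quotient compatibility \eqref{restr-quotient}, namely $(\Cal T/\Cal T')\restr{X\setminus Y}=(\Cal T\restr{X\setminus Y})/(\Cal T'\restr{X\setminus Y})$ and likewise over $Y$. This establishes commutativity of the diagram.
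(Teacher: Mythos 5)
Your proof is correct and follows essentially the same route as the paper's: both expand the two composites into sums indexed by $(\Cal U, Y)$ and $(Y,\Cal U_1,\Cal U_2)$ respectively, and match summands via the key observation that $Y\in\Cal T/\Cal U$ forces $\Cal U$ to split as $\Cal U\restr{X\setminus Y}\cdot\,\Cal U\restr{Y}$, together with the converse and the restriction/quotient compatibility \eqref{restr-quotient}. You are somewhat more explicit than the paper on two points it leaves implicit --- that $Y\in\Cal T/\Cal U$ already implies $Y\in\Cal T$, and that $\Cal U_1\Cal U_2\soprec\Cal T$ in the reverse direction --- but this is a matter of detail, not of method.
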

\begin{proof}
For any $\Cal T\in\mathbb T_X$ we have:
\begin{eqnarray}
(I\otimes\Delta)\circ\Gamma(\Cal T)&=&(I\otimes\Delta)\sum_{\Cal U\soprec\Cal T}\Cal T\otimes\Cal T/\Cal U\nonumber\\
&=&\sum_{\Cal U\soprec\Cal T}\sum_{Y\in\Cal T/\Cal U}\Cal U\otimes(\Cal T/\Cal U)\restr{X\setminus Y}\otimes (\Cal T/\Cal U)\restr{Y}\nonumber\\
&=&\sum_{\Cal U\soprec\Cal T}\sum_{Y\in\Cal T/\Cal U}\Cal U\otimes\Cal T\restr{X\setminus Y}\Big/\Cal U\restr{X\setminus Y}\otimes\Cal T\restr{Y}\Big/\Cal U\restr{Y},\label{lhs}
\end{eqnarray}
whereas
\begin{eqnarray}
m^{1,3}\circ(\Gamma\otimes\Gamma)\circ\Delta(\Cal T)&=&m^{1,3}\circ(\Gamma\otimes\Gamma)
\sum_{Z\in\Cal T}\Cal T\restr{X\setminus Z}\otimes\Cal T\restr{Z}\nonumber\\
&=&\sum_{Z\in\Cal T}\ 
\sum_{{\scriptstyle \Cal U_1\soprec\Cal T\restr{X\setminus Z}\atop\scriptstyle \Cal U_2\soprec\Cal T\restr{Z}}}
\Cal U_1\Cal U_2\otimes \Cal T\restr{X\setminus Z}\Big/\Cal U_1\otimes\Cal T\restr{Z}\Big/\Cal U_2.\label{rhs}
\end{eqnarray}
Now, $Y\in\Cal T/\Cal U$ means that $Y$ is a final segment for $\le_{\Cal T/\Cal U}$, i.e. for any $y\in Y$, if $z\le_{\Cal T/\Cal U}y$, then $z\in Y$. A fortiori $z\in Y$ if $z\le_{\Cal U}y$ or $y\le_{\Cal U}z$. Then $Y$ is both a final and initial segment for $\le_{\Cal U}$, i.e. both closed and open for $\Cal U$, which yields $\Cal U=\Cal U_1\Cal U_2$, with $\Cal U_1=\Cal U\restr{X\setminus Y}$ and $\Cal U_2=\Cal U\restr{Y}$.\\

Conversely, if $\Cal U=\Cal U\restr{X\setminus Y}\Cal U\restr{Y}$, then for $y\in Y$ and any $z\in X$ such that $y\le_{\Cal U}z$ or $z\le_{\Cal U}y$, we have $z\in Y$. By iteration we have $y\le_{\Cal U/\Cal U}z\Rightarrow z\in Y$. But $\Cal U\oprec\Cal T$, hence $y\le_{\Cal T/\Cal U}z\Rightarrow z\in Y$, which means $Y\in\Cal T/\Cal U$. This proves that \eqref{lhs} and \eqref{rhs} coincide.
\end{proof}
\section{Two commutative bialgebra structures}
Consider the graded vector space:
\begin{equation}
\Cal H=\bigoplus_{n\ge 0}\Cal H_n,
\end{equation}
where $\Cal H_0=k.\un$, and where $\Cal H_n$ is the linear span of topologies on $\{1,\ldots,n\}$ when $n\ge 1$, modulo homeomorphisms. It can be seen as the quotient of the species $\mathbb T$ by the "forget the labels" equivalence relation: $\Cal T\sim\Cal T'$ if $\Cal T$ (resp. $\Cal T'$) is a topology on a finite set $X$ (resp. $X'$), such that there is a bijection from $X$ onto $X'$ which is a homeomorphism with respect to both topologies. This equivalence relation is compatible with the product and both coproducts introduced in Section \ref{sect:alg}, giving rise to a product $\cdot$ and two coproducts $\Gamma$ and $\Delta$ on $\Cal H$, the first coproduct being internal to each $\Cal H_n$. It naturally leads to the following:
\begin{thm}\label{structures}
The graded vector space $\Cal H$ is endowed with the following algebraic structures:
\begin{itemize}
\item $(\Cal H,\cdot,\Delta)$ is a commutative graded connected Hopf algebra.
\item $(\Cal H,\cdot,\Gamma)$ is a commutative bialgebra, graded by the degree $d$ introduced at the end of \S\ \ref{sect:espece-cogebre}.
\item $(\Cal H,\cdot,\Delta)$ is a comodule-coalgebra on $(\Cal H,\cdot,\Gamma)$. More precisely the following diagram of unital algebra morphisms commutes:
\diagramme{
\xymatrix{
\Cal H\ar[rr]^{\Gamma}\ar[d]_{\Delta} &&
\Cal H\otimes\Cal H\ar[d]^{I\otimes\Delta}\\
\Cal H\otimes\Cal H\ar[dr]_{\Gamma\otimes\Gamma}&&
\Cal H\otimes\Cal H\otimes\Cal H\\
&\Cal H\otimes\Cal H\otimes\Cal H\otimes \Cal H\ar[ur]_{m^{1,3}}&
}
}
\end{itemize}
\end{thm}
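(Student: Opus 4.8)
The strategy is to obtain Theorem~\ref{structures} by transporting the species-level statements already established through the ``forget the labels'' surjection $\topo\twoheadrightarrow\Cal H$, and then adding the few ingredients the bialgebra and Hopf axioms demand: units, counits, the antipode, and the compatibility of the degree $d$ with $\Gamma$. Since, as observed just before the statement, the homeomorphism equivalence is compatible with $m$, $\Gamma$ and $\Delta$, the product $\cdot$ and both coproducts pass to $\Cal H$, and the following descend verbatim: coassociativity of $\Gamma$ (Theorem~\ref{main}), coassociativity and multiplicativity of $\Delta$, multiplicativity of $\Gamma$ (Proposition~\ref{mult-un}), and the $\Gamma$--$\Delta$ compatibility square proved at the end of Section~\ref{sect:alg}. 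Commutativity and associativity of $\cdot$, and the fact that $\cdot$ is graded both for the cardinality grading and for the degree $d$, are immediate from the rule $Y\in\Cal T_1\Cal T_2\Leftrightarrow(Y\cap X_1\in\Cal T_1$ and $Y\cap X_2\in\Cal T_2)$; the unit is the class $\un$ of the empty topology.

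For the first item I would take as counit the projection onto $\Cal H_0=k\,\un$ for the cardinality grading; the counit identities come from isolating the terms $Y=\emptyset$ and $Y=X$ in $\Delta(\Cal T)=\sum_{Y\in\Cal T}\Cal T\restr{X\setminus Y}\otimes\Cal T\restr{Y}$ and using $\Cal T\restr{\emptyset}=\un$. Thus $(\Cal H,\cdot,\Delta)$ is a graded connected bialgebra, and its antipode is then furnished by the usual recursion available on any graded connected bialgebra, giving the Hopf algebra.

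For the second item, coassociativity of $\Gamma$ is Theorem~\ref{main}, and the linear form $\varepsilon$ ($\varepsilon(\Cal T)=1$ if $\Cal T$ is group-like, $0$ otherwise) is a counit because for each $\Cal T$ there is a unique group-like $\Cal T'\oprec\Cal T$, which satisfies $\Cal T/\Cal T'=\Cal T$, while the only $\Cal T''$ with $\Cal T/\Cal T''$ group-like is $\Cal T''=\Cal T$; it is an algebra morphism since $\Cal T_1\Cal T_2$ is group-like iff $\Cal T_1$ and $\Cal T_2$ are. The only genuinely new verification is that $\Gamma$ respects the grading by $d$: for $\Cal T'\oprec\Cal T$, Lemma~\ref{classes} shows $\Cal T'$ and $\Cal T$ have the same equivalence classes, Lemma~\ref{composantes-connexes} shows $\Cal T/\Cal T'$ and $\Cal T$ have the same connected components, and the second clause in the definition of $\Cal T$-admissibility identifies the equivalence classes of $\Cal T/\Cal T'$ with the connected components of $\Cal T'$ (recall that $x\sim_{\Cal T'/\Cal T'}y$ means $x,y$ lie in the same $\Cal T'$-component); summing $d(\Cal T')$ and $d(\Cal T/\Cal T')$, the contributions of the components of $\Cal T'$ cancel and one is left with $d(\Cal T)$. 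Hence $(\Cal H,\cdot,\Gamma)$ is a bialgebra graded by $d$; note that it is \emph{not} connected for $d$, its degree-zero part being the polynomial algebra on the connected group-like topologies (equivalently, on integer partitions), which is why one should not expect more than a bialgebra here.

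For the third item, the displayed diagram is exactly the image in $\Cal H$ of the $\Gamma$--$\Delta$ compatibility square of Section~\ref{sect:alg}; together with the coassociativity (Theorem~\ref{main}) and the counit property of $\Gamma$, which are the comodule axioms, this says that $(\Cal H,\cdot,\Delta)$ is a comodule-coalgebra over $(\Cal H,\cdot,\Gamma)$, and since $\Delta$, $\Gamma$, $I\otimes\Delta$ and $m^{1,3}$ are all unital algebra morphisms (Proposition~\ref{mult-un} together with the multiplicativity of $\Delta$), the square is one of unital algebra morphisms. The hard work is in fact already behind us: it is concentrated in the combinatorics of the quotient $\Cal T/\Cal T'$ carried out in Lemmas~\ref{shrink}, \ref{classes}, \ref{composantes-connexes}, Proposition~\ref{transitivite} and Proposition~\ref{transit}, which underpin the coassociativity of $\Gamma$ and its compatibility with $\Delta$; what remains is bookkeeping, the one new computation being the additivity of $d$ under $\Gamma$, whose crucial input is the admissibility condition.
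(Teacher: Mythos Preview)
Your proposal is correct and follows exactly the approach the paper intends: the paper gives no proof of Theorem~\ref{structures} beyond the sentence preceding it (``This equivalence relation is compatible with the product and both coproducts \ldots\ It naturally leads to the following''), so the theorem is meant to be read as an immediate consequence of the species-level results of Section~\ref{sect:alg} passed to the quotient. You have filled in precisely the details the paper omits---the counit identities, the antipode via graded connectedness, and especially the additivity $d(\Cal T')+d(\Cal T/\Cal T')=d(\Cal T)$ for $\Cal T'\oprec\Cal T$ (which the paper only asserts as ``easy to see'' at the end of \S\ref{sect:espece-cogebre}); your use of Lemmas~\ref{classes} and~\ref{composantes-connexes} together with the admissibility condition is the right computation.
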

\begin{rmk}
The Hopf algebra of finite topologies of \cite{FM} is closely related, but the product is noncommutative due to renumbering. In fact, $\hbox{\bf T}_n$ stands for the set of topologies on $[n]=\{1,\ldots,n\}$, and $\hbox{\bf T}$ is the (disjoint) union of the $\hbox{\bf T}_n$'s for $n\ge 0$. For $\Cal T\in \hbox{\bf T}_n$ and $\Cal T'\in \hbox{\bf T}_{n'}$, the product $\Cal T\Cal T'$ is the topology on $[n+n']$ the open sets of which are $Y\sqcup (Y'+n)$, where $Y\in\Cal T$ and $Y'\in\Cal T'$. The two topologies $\Cal T\Cal T'$ and $\Cal T'\Cal T$ are not equal, though homeomeorphic. The "joint" product $\downarrow$, for which the open sets of $\Cal T\downarrow\Cal T'$ are the open sets $Y'$ of $\Cal T'$ and the sets $Y\sqcup \{n+1,\ldots,n+n'\}$ with $Y\in\Cal T$, is also associative. The empty set $\emptyset$ is the common unit for both products.
\end{rmk}
For any totally ordered finite set $E$ of cardinality $n$, let us denote by $\mop{Std}:E\to[n]$ the standardization map, i.e. the unique increasing bijection from $E$ onto $[n]$. This map yields a bijection form $\Cal P(E)$ onto $\Cal P([n])$ also denoted by $\mop{Std}$. The coproduct is defined by:
\begin{equation}
\Delta(\Cal T)=\sum_{Y\in \Cal T}\mop{Std}(\Cal T\restr{[n]\setminus Y})\otimes\mop{Std}(\Cal T\restr Y).
\end{equation}
\begin{prop}[\cite{FM} Proposition 6]
Let $\Cal H_{\hbox{\sevenbf T}}$ be the graded vector space freely generated by the $\hbox{\bf T}_n$'s. Then\\
\begin{enumerate}
\item $(\Cal H_{\hbox{\sevenbf T}},\cdot,\Delta)$ is a graded Hopf algebra,
\item $(\Cal H_{\hbox{\sevenbf T}},\downarrow,\Delta)$ is a graded infinitesimal Hopf algebra,
\item The involution $\Cal T\mapsto\overline{\Cal T}$ is a morphism for the product $\cdot$ and an antimorphism for the coproduct $\Delta$.
\end{enumerate}
\end{prop}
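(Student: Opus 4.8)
This proposition is quoted from \cite{FM}, so the task is chiefly to indicate how each item follows; all three reduce to facts already established above, the only real work being the bookkeeping with the standardization map $\mop{Std}$. For item (1), note that $\Cal H_{\hbox{\sevenbf T}}$ is essentially the skeleton of the species $\mathbb T$: one retains a single representative $[n]$ per cardinality and transports the external coproduct to the representatives through the increasing bijections $\mop{Std}$. Coassociativity and multiplicativity of $\Delta$ on $\Cal H_{\hbox{\sevenbf T}}$ then follow from the coassociativity and multiplicativity of the external coproduct proved above, once one records the elementary lemma that $\mop{Std}$ is functorial under composition of increasing bijections and, in particular, sends a topology living on an \emph{ordered} disjoint union $E_1\sqcup E_2$ (every element of $E_1$ before every element of $E_2$) to the shifted concatenation of the standardizations of the two blocks. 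The counit is the augmentation $\varepsilon(\un)=1$, $\varepsilon(\Cal T)=0$ for $\Cal T$ on $[n]$ with $n\ge1$ (the $Y=\emptyset$ and $Y=[n]$ terms of $\Delta$); the product $\cdot$ is clearly associative with unit $\un$, and both $\cdot$ and $\Delta$ are homogeneous for the grading by cardinality with $\Cal H_0=k\,\un$. So $(\Cal H_{\hbox{\sevenbf T}},\cdot,\Delta)$ is a graded connected bialgebra, and the antipode is produced by the usual recursion on the grading.

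For item (2) the relevant product is the ordinal sum $\downarrow$: reading off the open sets described in the Remark, $\le_{\Cal T\downarrow\Cal T'}$ is the quasi-order placing all of $\Cal T$ below all of $\Cal T'$, and it is associative with unit $\un$. For the compatibility with $\Delta$, I would classify an open set $Y$ of $\Cal T\downarrow\Cal T'$ on $[n+n']$ according to whether it is contained in the top block $\{n+1,\ldots,n+n'\}$ --- in which case $Y=Y'+n$ with $Y'\in\Cal T'$ --- or contains it --- in which case $Y=Y_0\sqcup\{n+1,\ldots,n+n'\}$ with $Y_0\in\Cal T$. Every open set has exactly one of these shapes, the two families meeting only in $Y=\{n+1,\ldots,n+n'\}$. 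On the first family the two restrictions are $\Cal T\downarrow\bigl(\Cal T'\restr{[n']\setminus Y'}\bigr)$ and $\Cal T'\restr{Y'}$; on the second they are $\Cal T\restr{[n]\setminus Y_0}$ and $\bigl(\Cal T\restr{Y_0}\bigr)\downarrow\Cal T'$. Applying $\mop{Std}$ (which intertwines $\downarrow$ with ordered concatenation, by the lemma of item (1)) and summing, after removing the single doubly counted term $\Cal T\otimes\Cal T'$ coming from $Y=\{n+1,\ldots,n+n'\}$, one obtains, in Sweedler notation,
$$\Delta(\Cal T\downarrow\Cal T')=\sum(\Cal T\downarrow\Cal T'_{(1)})\otimes\Cal T'_{(2)}+\sum\Cal T_{(1)}\otimes(\Cal T_{(2)}\downarrow\Cal T')-\Cal T\otimes\Cal T'.$$
This is precisely the relation defining a unital infinitesimal bialgebra (in the sense of Loday and Ronco) for the pair $(\downarrow,\Delta)$; combined with connectedness it yields an antipode, so $(\Cal H_{\hbox{\sevenbf T}},\downarrow,\Delta)$ is a graded infinitesimal Hopf algebra.

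For item (3), write $\iota$ for the involution $\Cal T\mapsto\overline{\Cal T}$ and $\tau$ for the flip. The open sets of $\overline{\Cal T}$ are the complements of the open sets of $\Cal T$, and complementation commutes with restriction, $\overline{\Cal T}\restr Y=\overline{\Cal T\restr Y}$, and --- being a relabelling --- with $\mop{Std}$. Hence the change of variable $Z=[n]\setminus Y$ (which exchanges $Y$ and its complement) in $\Delta(\overline{\Cal T})=\sum_{Z\in\overline{\Cal T}}\mop{Std}(\overline{\Cal T}\restr{[n]\setminus Z})\otimes\mop{Std}(\overline{\Cal T}\restr Z)$ identifies this with $(\iota\otimes\iota)\circ\tau\circ\Delta(\Cal T)$; that is, $\iota$ is an antimorphism for $\Delta$. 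That $\iota$ is an algebra morphism, $\overline{\Cal T_1\Cal T_2}=\overline{\Cal T_1}\,\overline{\Cal T_2}$, is immediate from the definition of the product.

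The main obstacle, modest as it is, lies entirely in the bookkeeping with $\mop{Std}$: the clean way to proceed is to isolate once and for all the lemma that standardization converts an ordered disjoint union into the shifted concatenation of the standardizations of the blocks, and then to thread it through the enumerations of open sets and restrictions in items (1) and (2). The one spot where this is more than routine is item (2), where one must avoid double counting the boundary open set $\{n+1,\ldots,n+n'\}$; keeping track of its single contribution $\Cal T\otimes\Cal T'$ is exactly what produces the correction term $-\Cal T\otimes\Cal T'$, i.e.\ makes the compatibility infinitesimal rather than that of an ordinary bialgebra.
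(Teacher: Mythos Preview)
The paper does not prove this proposition at all: it is stated as a citation of \cite[Proposition 6]{FM}, with no proof environment following it. Your sketch is therefore not to be compared against anything in the present paper, but it is correct and is essentially the argument one finds in \cite{FM}. In particular, your classification of the open sets of $\Cal T\downarrow\Cal T'$ into the two families (contained in, or containing, the top block), with the single overlap at $\{n+1,\ldots,n+n'\}$ producing the correction term $-\Cal T\otimes\Cal T'$, is exactly the computation that establishes the Loday--Ronco unital infinitesimal compatibility; and your change of variable $Z=[n]\setminus Y$ together with $\overline{\Cal T}\restr Y=\overline{\Cal T\restr Y}$ is the standard verification of item (3).
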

The internal coproduct $\Gamma$ on each homogeneous component of $\Cal H_{\hbox{\sevenbf T}}$ does not interact so nicely with the external coproduct $\Delta$ as it does in the commutative setting because of the shift and the standardization. 
Here is an example:
\begin{align*}
m^{1,3}\circ (\Gamma\otimes \Gamma)\circ \Delta(\tddeux{$3$}{$1,2$}\hspace{2mm})&=
\tddeux{$3$}{$1,2$}\hspace{2mm}\otimes \tdun{$1,2,3$}\hspace{.4cm}\otimes 1
+\tdun{$1,2$}\hspace{.2cm}\tdun{$3$}\otimes \tddeux{$3$}{$1,2$}\hspace{2mm}\otimes 1
+\tddeux{$3$}{$1,2$}\hspace{2mm}\otimes 1\otimes \tdun{$1,2,3$}\hspace{4mm}\\
&+\tdun{$1,2$}\hspace{2mm}\tdun{$3$}\otimes 1\otimes \tddeux{$3$}{$1,2$}\hspace{2mm}
+\textcolor{red}{\tdun{$1$}\tdun{$2,3$}\hspace{2mm} }\otimes \tdun{$1$}\otimes \tdun{$1,2$}\hspace{2mm},\\
(Id\otimes \Delta)\circ \Gamma(\tddeux{$3$}{$1,2$}\hspace{2mm})&=
\tddeux{$3$}{$1,2$}\hspace{2mm}\otimes \tdun{$1,2,3$}\hspace{.4cm}\otimes 1
+\tdun{$1,2$}\hspace{.2cm}\tdun{$3$}\otimes \tddeux{$3$}{$1,2$}\hspace{2mm}\otimes 1
+\tddeux{$3$}{$1,2$}\hspace{2mm}\otimes 1\otimes \tdun{$1,2,3$}\hspace{4mm}\\
&+\tdun{$1,2$}\hspace{2mm}\tdun{$3$}\otimes 1\otimes \tddeux{$3$}{$1,2$}\hspace{2mm}
+\textcolor{red}{\tdun{$1,2$}\hspace{2mm} \tdun{$3$}}\otimes \tdun{$1$}\otimes \tdun{$1,2$}\hspace{2mm}.
\end{align*}



\section{Linear extensions and set compositions}

\subsection{Two Hopf algebras on words}

Let us first recall some facts on two well-known Hopf algebras.
Let $X$ be a totally ordered alphabet, and let $A=\mathbb{Q}[[X]]$ be the algebra of formal series generated by $X$.
A formal series $f \in A$ is quasi-symmetric if for any $X_1<\ldots<X_k$ and $Y_1<\ldots<Y_k$ in $X$, for any $a_1,\ldots,a_k\geq 1$,
the coefficients of $X_1^{a_1}\ldots X_k^{a_k}$ and of $Y_1^{a_1}\ldots Y_k^{a_k}$ in $f$ are equal.
The subalgebra of quasi-symmetric functions on $X$ will be denoted by $\QSym(X)$. For any composition $(a_1,\ldots,a_k)$, we put:
$$M_{(a_1,\ldots,a_k)}(X)=\sum_{X_1<\ldots<X_k} X_1^{a_1}\ldots X_k^{a_k}.$$
The family $(M_c(X))$ indexed by compositions linearly spends $\QSym(X)$; if $X$ is infinite, this is a basis. 

We shall use the following notation: if $i_1,\ldots,i_p\geq 0$, $\mop{QSh}(i_1,\ldots,i_p)$ is the set of surjections $u:[i_1+\ldots+i_p]\longrightarrow \hskip -5mm\longrightarrow[\max(u)]$,
such that for all $1\leq j\leq p$:
$$u_{i_1+\ldots+i_{j-1}+1}<\ldots<u_{i_1+\ldots+i_j}.$$
With this notation, for all compositions $(c_1,\ldots,c_k)$, $(c_{k+1},\ldots,c_{k+l})$:
$$M_{(c_1,\ldots,c_k)}(X)M_{(c_{k+1},\ldots,c_{k+l})}(X)
=\sum_{\sigma \in \smop{QSh}(k,l)} M_{(\sum_{\sigma(i)=1} c_i,\ldots, \sum_{\sigma(i)=\max(\sigma)}c_i)}(X).$$

Let $X,Y$ be two totally ordered alphabets.
\begin{enumerate}
\item  $X\sqcup Y$ is also totally ordered, the elements of $X$ being smaller than the elements of $Y$.
For any composition $(c_1,\ldots,c_k)$:
$$M_{(c_1,\ldots,c_k)}(X\sqcup Y)=\sum_{i=0}^k M_{(c_1,\ldots,c_i)}(X)M_{(c_{i+1},\ldots,c_k)}(Y).$$
\item $X\times Y$ is totally ordered by the lexicographic order. For any composition $(c_1,\ldots,c_k)$:
\begin{equation*}
M_{(c_1,\ldots,c_k)}(X\times Y)=\sum_{i_1+\cdots+i_p=k}
M_{(c_1,\ldots,c_{i_1})}(Y)\ldots M_{(c_{i_1+\cdots+i_{p-1}+1},\ldots,c_{i_1+\cdots+i_p})}(Y) M_{C_1,\ldots ,C_p}(X).
\end{equation*}
\end{enumerate}
with $C_1=c_1+\cdots+c_{i_1}$, \ldots , $C_p=c_{i_1+\cdots+i_{p-1}+1}+\cdots +c_{i_1+\cdots+i_p}$. Taking two denumerable infinite alphabets $X$ and $Y$, we identify $\QSym(X)$ and $\QSym(Y)$,
$\QSym(X)\otimes \QSym(Y)$ with $\QSym(X\sqcup Y)$ and $\QSym(X\times Y)$, $(x,y)$ being identified with $xy$,
and  we obtain a Hopf algebra $\QSym$, with a basis $(M_c)$ indexed by compositions. 
If $(c_1,\ldots,c_k)$ and $(c_{k+1},\ldots,c_{k+l})$ are compositions:
\begin{align*}
M_{(c_1,\ldots,c_k)}M_{(c_{k+1},\ldots,c_{k+l})}&=\sum_{\sigma \in \smop{QSh}(k,l)} M_{(\sum_{\sigma(i)=1} c_i,\ldots, \sum_{\sigma(i)=\max(\sigma)}c_i)},\\
\Delta(M_{(c_1,\ldots,c_k)})&=\sum_{i=0}^k M_{(c_1,\ldots,c_i)}\otimes M_{(c_{i+1},\ldots,c_k)},\\
\rho(M_{(c_1,\ldots,c_k)})&=\sum_{i_1+\cdots+i_p=k}
M_{(c_1,\ldots,c_{i_1})}\ldots M_{(c_{i_1+\cdots+i_{p-1}+1},\ldots,c_{i_1+\cdots+i_p})} \otimes M_{(C_1,\ldots,C_p)}.
\end{align*}

The construction of the Hopf algebra of packed words $\WQSym$ is similar. We now work in $B=\mathbb{Q}\langle\langle X\rangle\rangle$,
the algebra of noncommutative formal series generated by $X$. Recall that a packed word is a surjective map $w:[k]\longrightarrow\hskip -5mm\longrightarrow  [\max(w)]$, 
which we write as the word $w=w_1\ldots w_k$. Let $X_1\ldots X_k$ be a monomial in $B$. There is a unique bijective, increasing map $f$,
from $\{X_1,\ldots,X_k\}$ to a set $[m]$. Then $\mop{Pack}(X_1\ldots X_k)$ is the packed word $f(X_1)\ldots f(X_k)$.
For any packed word $w$, we put:
$$M_w(X)=\sum_{\smop{Pack}(X_1\ldots X_k)=w} X_1\ldots X_k\in B.$$
The subspace of $B$ generated by these elements is a subalgebra of $B$, denoted by $\WQSym(X)$. 
Abstracting this, we obtain an algebra $\WQSym$, with a basis $(M_w)$ indexed by the set of packed words. Its product is given by:
$$M_u M_v=\sum_{w \in \smop{QSh}(\max(u),\max(v))} M_{w\circ (u v[\max(u)])}.$$
\noindent The disjoint union of alphabets makes it a Hopf algebra, with the following coproduct:
$$\Delta(M_w)=\sum_{k=0}^{\max(w)} M_{w_{\mid \{1,\ldots,k\}}}\otimes M_{\smop{Pack}(w_{\mid\{k+1,\ldots,\max(w)\}})},$$
where for all set $I$, $w_{\mid I}$ is the word obtained by taking the letters of $w$ belonging to $I$.
The cartesian product of alphabets gives $\WQSym$ an internal coproduct:
$$\rho(M_u)=\sum_{i_1+\ldots+i_p=\max(u)}\sum_{v\in \smop{QSh}(i_1,\ldots,i_p)} M_{v\circ u}\otimes 
M_{(\underbrace{1\ldots 1}_{i_1}\ldots \underbrace{p\ldots p}_{i_p})\circ u}.$$

\subsection{The coalgebra species of set compositions}

We now define a bialgebra in the category of coalgebra species, which will give both $\QSym$ and $\WQSym$.

\begin{defn}\cite{Sta01}
Let $X$ be a finite set. A set composition or an ordered partition of $X$ is a finite sequence $(X_1,\ldots,X_k)$ of finite sets such that:
\begin{enumerate}
\item For all $1\leq i \leq k$, $X_i\neq \emptyset$.
\item $X=X_1\sqcup \ldots \sqcup X_k$.
\end{enumerate} 
For any finite space $X$, the space generated by the set of set compositions of $X$ will be denoted by $\setcomp_X$.
This defines a species $\setcomp$.
\end{defn}

The Hilbert formal series of $\setcomp$ is given by Fubini numbers, sequence A000670  of the OEIS.

We first give this species a structure of bialgebra in the category of species.

\begin{defn}\begin{enumerate}
\item Let $Y\subseteq X$ be two finite sets and let $C=(X_1,\ldots,X_k)$ be a set composition on $X$.
We put $I=\{i\in [k]\mid X_i\cap Y\neq \emptyset\}=\{m_1<\ldots<m_l\}$. The set composition $C_{\mid Y}$ of $Y$ is:
$$C_{\mid Y}=(X_{m_1}\cap Y,\ldots,C_{m_l}\cap Y).$$
For any finite sets $X,Y$, we define a product:
$$\left\{\begin{array}{rcl}
\setcomp_X \otimes \setcomp_Y&\longrightarrow&\setcomp_{X\sqcup Y}\\
C'\otimes C''&\longrightarrow&\displaystyle C'C''=\sum_{C,\,C_{\mid X}=C',C_{\mid Y}=C''} C.
\end{array}\right.$$
\item For any finite set $X$, we define a coproduct:
$$\Delta:\left\{\begin{array}{rcl}
\setcomp_X&\longrightarrow&\displaystyle \bigoplus_{Y\subseteq X} \setcomp_{X\setminus Y}\otimes \setcomp_Y\\
C=(X_1,\ldots,X_k)&\longrightarrow&\displaystyle \sum_{i=0}^k (X_1,\ldots,X_i)\otimes (X_{i+1},\ldots,X_k).
\end{array}\right.$$
\item For any finite set $X$, we define an internal coproduct $\rho$ on $\setcomp_X$, making it a coassociative, counitary coalgebra by:
\begin{align*}
\rho((X_1,\ldots,X_k))&=\sum_{i_1+\ldots+i_p=k} (X_1,\ldots,X_{i_1})\ldots (X_{i_1+\ldots+i_{p-1}+1},\ldots,X_{i_1+\ldots+i_p})\\
&\hspace{2.5cm}\otimes (X_1\sqcup\ldots\sqcup X_{i_1},\ldots,X_{i_1+\ldots+i_{p-1}+1}\sqcup \ldots \sqcup X_{i_1+\ldots+i_p}).
\end{align*}\end{enumerate}\end{defn}

{\bf Examples.} Let $A,B,C$ be finite, nonempty sets.
\begin{align*}
(A)(B)&=(A,B)+(B,A)+(A\sqcup B).\\
(A,B)(C)&=(A,B,C)+(A,C,B)+(C,A,B)+(A,B\sqcup C)+(A\sqcup C,B).\\
(A)(B,C)&=(A,B,C)+(B,A,C)+(B,C,A)+(A\sqcup B,C)+(B,A\sqcup C);\\ \\
\rho((A))&=(A)\otimes (A).\\
\rho((A,B))&=(A,B)\otimes (A\sqcup B)+(A)(B)\otimes (A,B)\\
&=(A,B)\otimes (A\sqcup B)+((A,B)+(B,A)+(A\sqcup B))\otimes (A,B).\\
\rho((A,B,C))&=(A,B,C)\otimes (A\sqcup B\sqcup C)+(A,B)(C)\otimes (A\sqcup B,C)\\
&+(A)(B,C)\otimes (A,B\sqcup C)+(A)(B)(C)\otimes (A,B,C)\\
&=(A,B,C)\otimes (A\sqcup B\sqcup C)\\
&+((A,B,C)+(A,C,B)+(C,A,B)+(A\sqcup C,B)+(A,B\sqcup C))\otimes (A\sqcup B,C)\\
&+((A,B,C)+(B,A,C)+(B,C,A)+(A\sqcup B,C)+(B,A\sqcup C))\otimes (A,B\sqcup C)\\
&+\big((A,B,C)+(A,C,B)+(B,A,C)+(B,C,A)+(C,A,B)+(C,B,A)\\
&+(A\sqcup B,C)+(A\sqcup C,B)+(B\sqcup C,A)+(A,B\sqcup C)+(B,A\sqcup C)+(C,A\sqcup B)\\
&+(A\sqcup B\sqcup C)\big)\otimes (A,B,C).
\end{align*}

%

\begin{prop}
$\setcomp$ is a bialgebra in the category of coalgebra species.
\end{prop}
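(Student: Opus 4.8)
The plan is to establish, one layer at a time, the structure that makes up ``bialgebra in the category of coalgebra species'': that $(\setcomp,m,\Delta)$ is a bialgebra in species for the Cauchy product, that $(\setcomp,\rho)$ is a coalgebra species (i.e.\ $\rho$ is coassociative and counital on each fibre, functorially), and that $m$ and $\Delta$ are compatible with $\rho$ --- the multiplicativity $\rho\circ m=(m\otimes m)\circ\tau^{2,3}\circ(\rho\otimes\rho)$ for the product, and the comodule--coalgebra identity $(I\otimes\Delta)\circ\rho=m^{1,3}\circ(\rho\otimes\rho)\circ\Delta$ for the external coproduct, exactly paralleling the chain of results obtained for $\mathbb T$ in Section~\ref{sect:alg}. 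The counits are the obvious ones: $\varepsilon$ vanishing outside $\setcomp_\emptyset$ and sending the empty composition to $1$ for $\Delta$, and $\varepsilon_\rho((X_1,\ldots,X_k))=\delta_{k,1}$ for $\rho$; the units are dual.

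The one computational device I would set up first is the identification of a set composition $(X_1,\ldots,X_k)$ of $X$ with the surjection $X\twoheadrightarrow[k]$ sending $X_i$ to $i$. Under this identification $C_{\mid Y}$ is ``restrict and relabel in an order-preserving way'', and the product reads $C'C''=\sum_C C$, the sum running over all surjections $C$ of $X\sqcup Y$ with $C_{\mid X}=C'$ and $C_{\mid Y}=C''$ --- equivalently, over all interleavings-with-merging of the two block sequences. The only lemma needed is transitivity of restriction, $(C_{\mid Y})_{\mid Y'}=C_{\mid Y'}$ for $Y'\subseteq Y$, together with the fact that a surjection on a disjoint union $X_1\sqcup\cdots\sqcup X_r$ is determined by its restrictions to the $X_i$ and the relative order of all of its blocks. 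From this, associativity of $m$ is immediate --- both $(C'C'')C'''$ and $C'(C''C''')$ equal the sum of all surjections of $X\sqcup Y\sqcup Z$ with the three prescribed restrictions --- and unitality follows from $C_{\mid\emptyset}=()$. Coassociativity and counitality of $\Delta$ (deconcatenation of block sequences) need nothing.

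For the bialgebra axiom I would argue that a deconcatenation cut of a composition occurs at a block boundary and never splits a block; if $C$ merges $C'$ on $X$ and $C''$ on $Y$, every block of $C$ is an $X_i$, a $Y_j$, or an $X_i\sqcup Y_j$, so a cut of $C$ induces a cut of $C'$ and a cut of $C''$ whose left (resp.\ right) halves merge to the left (resp.\ right) half of $C$; transitivity of restriction makes this assignment a bijection, which is the content of $\Delta\circ m=(m\otimes m)\circ\tau^{2,3}\circ(\Delta\otimes\Delta)$. For $\rho$ I would proceed the same way: a grouping of the block sequence of a merge $C$ of $C',C''$ into $p$ consecutive runs induces groupings of $C'$ and of $C''$ into $p$ runs, with the $j$-th run of $C$ a merge of the $j$-th runs, and run-by-run merging reconstructs $C$ and its grouping; reading this bijection through the canonical identification $\setcomp_{X_1\sqcup X_2}^{\otimes 2}\simeq(\setcomp_{X_1}\otimes\setcomp_{X_2})^{\otimes 2}$ gives $\rho\circ m=(m\otimes m)\circ\tau^{2,3}\circ(\rho\otimes\rho)$. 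Coassociativity and counitality of $\rho$ then follow by a direct check on two-step groupings (for the left-hand tensor factor one may use the multiplicativity of $\rho$ just proved); the comodule--coalgebra identity with $\Delta$ is again the same kind of bookkeeping, resting on the observation that a deconcatenation cut commutes with any grouping into runs because it is a union of run-boundaries for a suitable coarsening. Finally, functoriality in $X$ and compatibility with the involution $\overline{(\,\cdot\,)}$ are automatic, since every structure map is given by a relabelling-equivariant formula.

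The genuine content is concentrated in the bijective lemmas that exchange ``merge then group'' with ``group then merge'' for set compositions; concretely, the main obstacle is the multiplicativity of the internal coproduct $\rho$ together with its comodule--coalgebra compatibility with $\Delta$ --- this is the species-level incarnation of the (classically somewhat delicate) fact that the internal coproduct of $\WQSym$ is an algebra morphism and fits the comodule--coalgebra diagram. No individual step is hard once the surjection viewpoint and transitivity of restriction are in place, but the argument demands disciplined handling of the numerous instances of $\tau^{2,3}$, of $m^{1,3}$, and of the canonical identifications of fourfold tensor products of fibres, which is where essentially all the effort goes.
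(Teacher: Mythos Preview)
Your approach is correct but takes a genuinely different route from the paper's. The paper does not verify the axioms on $\setcomp$ directly; instead it derives the proposition as a corollary of Theorem~\ref{theoextension}: the linear-extension map $L:\topo_X\to\setcomp_X$, $\T\mapsto\sum_{C\in\lin_\T}C$, is shown to be surjective and to intertwine $(m,\Delta,\Gamma)$ on $\topo$ with $(m,\Delta,\rho)$ on $\setcomp$. All the bialgebra-in-coalgebra-species axioms were established for $\topo$ in Section~\ref{sect:alg}, and since $L$ (hence $L\otimes L$ and $L\otimes L\otimes L$) is onto, each identity on $\topo$ pushes forward to the corresponding identity on $\setcomp$.

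What you do instead is verify everything by hand on $\setcomp$, via bijective combinatorics on surjections and restrictions. This is self-contained and avoids the machinery of $\T$-admissible refinements and quotient topologies that the paper needs for $\Gamma$; on the other hand it duplicates effort, because the ``merge then group $\leftrightarrow$ group then merge'' bijections you describe are essentially the shadow, on linear extensions, of the proof that $L$ respects $\rho$ and $\Gamma$. The paper's route buys unification---one theorem about $L$ yields all of $\setcomp$'s structure at once---while yours buys independence from $\topo$. One small point to tighten in your sketch: when a grouping of a merge $C$ into $p$ runs is restricted to $C'$ or $C''$, some induced runs may be empty; the cleanest fix is to index the bijection by the right-hand tensor factor (the coarsened composition $F$ with $F_{\mid X}$, $F_{\mid Y}$ the coarsenings of $C'$, $C''$), which absorbs that bookkeeping.
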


This proposition is a corollary of theorem \ref{theoextension} below, which will make $\setcomp$ appear as a quotient of the coalgebra species bialgebra $\topo$.\\

The counit of the coalgebra $\setcomp_X$ is given by:
$$\varepsilon(C)=\begin{cases}
1\mbox{ if }C=(X),\\
0\mbox{ otherwise}.
\end{cases}$$

We obtain from $\setcomp$ two bialgebras with an internal coproduct.
First, it induces a bialgebra structure on the vector space generated by the set compositions, up to a renumbering. 
For any set composition $C=(X_1,\ldots,X_k)$, we put $type(C)=(|X_1|,\ldots,|X_k|)$.
If $C,C'$ are two set compositions, $C$ and $C'$ are equal up to a renumbering if, and only if, $type(C)=type(C')$.
So this bialgebra has a basis $(M_c)$, indexed by compositions, and direct computations shows this is $\QSym$.

Secondly, we restrict ourselves to sets $[n]$, $n\geq 0$; we identify any subset $I\subseteq [n]$ with $[|I|]$ via the unique increasing bijection.
Set compositions on $[n]$ are identified with packed words of length $n$, via the bijection:
$$\left\{\begin{array}{rcl}
\{\mbox{Packed words of length }n\}&\longrightarrow&\setcomp_{[n]}\\
u&\longrightarrow&(u^{-1}(1),\ldots,u^{-1}(\max(u))).
\end{array}\right.$$
We obtain a bialgebra with a basis indexed by packed words, which is precisely $\WQSym$.

\subsection{Linear extensions}

\begin{defn}
Let $\T\in \topo_X$ and let $C=(X_1,\ldots X_k)\in \setcomp_X$.
We shall say that $C$ is a linear extension of $\T$ if :
\begin{enumerate}
\item For all $i,j\in [k]$, for all $x \in X_i$, $y\in X_j$, $x<_\T y$ $\Longrightarrow$ $i<j$.
\item For all $i,j\in [k]$, for all $x \in X_i$, $y\in X_j$, $x\sim_\T y$ $\Longrightarrow$ $i=j$.
\end{enumerate}
The set of linear extensions of $\T$ will be denoted by $\lin_\T$. \end{defn}

\begin{thm}\label{theoextension}
Let $X$ be a finite set. We define:
$$L:\left\{\begin{array}{rcl}
\topo_X&\longrightarrow&\setcomp_X\\
\T&\longrightarrow&\displaystyle \sum_{C \in \lin_\T}C.
\end{array}\right.$$
Then $L$ is a surjective morphism of bialgebras in the category of coalgebra species, that is to say:
\begin{enumerate}
\item For all finite sets $X,Y$, for all $\T\in \topo_X$, $\T'\in \topo_Y$, 
$$L(\T\T')=L(\T)L(\T').$$
\item For all finite set $X$, for all $\T\in \topo_X$,
$$\Delta\circ L(\T)=(L\otimes L)\circ \Delta(\T).$$
\item For all finite set $X$, for all $\T\in \topo_X$,
$$\rho\circ L(\T)=(L\otimes L)\circ \Gamma(\T).$$
 \end{enumerate}\end{thm}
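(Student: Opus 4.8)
The plan is to prove surjectivity and each of the three morphism identities separately, in every case by an explicit bijection between the index sets of the two sums being compared (all the maps in sight are sums over explicitly described bases). \emph{Surjectivity}: given $C=(X_1,\ldots,X_k)\in\setcomp_X$, let $\T_C\in\topo_X$ be the topology whose quasi-order has equivalence classes $X_1,\ldots,X_k$ totally ordered by $X_1<\cdots<X_k$ (the ``$T_0$-chain'' of coarse topologies on the blocks); the two defining conditions of a linear extension force $\lin_{\T_C}=\{C\}$, so $L(\T_C)=C$. \emph{Identity (1)}: since $\le_{\T_1\T_2}$ relates no element of $X_1$ to one of $X_2$ and restricts to $\le_{\T_i}$ on each $X_i$, one has $C\in\lin_{\T_1\T_2}$ iff $C_{\mid X_1}\in\lin_{\T_1}$ and $C_{\mid X_2}\in\lin_{\T_2}$; regrouping the sum defining $L(\T_1\T_2)$ and comparing with the definition of the product on $\setcomp$ gives $L(\T_1\T_2)=L(\T_1)L(\T_2)$. \emph{Identity (2)}: I would check that $(C,i)\mapsto\bigl(Z,C_{\mid X\setminus Z},C_{\mid Z}\bigr)$ with $Z:=X_{i+1}\sqcup\cdots\sqcup X_k$ is a bijection from $\{(C,i):C=(X_1,\ldots,X_k)\in\lin_\T,\ 0\le i\le k\}$ onto $\{(Y,C',C''):Y\in\T,\ C'\in\lin_{\T_{\mid X\setminus Y}},\ C''\in\lin_{\T_{\mid Y}}\}$; one needs that $Z$ is a final segment of $\le_\T$ because $C$ is a linear extension, that the two restrictions are linear extensions of the restricted topologies, and conversely that the concatenation $C'\cdot C''$ of a compatible pair is a linear extension of $\T$ with ``tail'' $Y$, using that no $\sim_\T$-class straddles an open set. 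This matches the expansions of $\Delta\circ L(\T)$ and $(L\otimes L)\circ\Delta(\T)$.

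Identity (3) is the substantial one. Unwinding the definitions,
$$\rho\circ L(\T)=\sum_{C\in\lin_\T}\ \sum_{i_1+\cdots+i_p=k}\bigl(C^{(1)}\cdots C^{(p)}\bigr)\otimes\overline C,\qquad (L\otimes L)\circ\Gamma(\T)=\sum_{\T'\soprec\T}\ \sum_{C'\in\lin_{\T'},\ C''\in\lin_{\T/\T'}}C'\otimes C'',$$
where $C^{(j)}$ is the $j$-th run of consecutive blocks of $C$ cut out by a composition $(i_1,\ldots,i_p)$ of $k$, $\overline C=(\overline X_1,\ldots,\overline X_p)$ is the associated coarsening ($\overline X_j$ the union of the blocks in the $j$-th run), and $C^{(1)}\cdots C^{(p)}=\sum_{D:\,D_{\mid\overline X_j}=C^{(j)}\ \forall j}D$ in $\setcomp$. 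I would match these two sums termwise via the assignment $\bigl(C,(i_1,\ldots,i_p),D\bigr)\mapsto(\T',D,\overline C)$, where $\T'$ is the topology defined by $\le_{\T'}\,:=\,\le_\T\,\cap\,\{(x,y):x\text{ and }y\text{ lie in the same run }\overline X_j\}$, the inverse sending $(\T',C',C'')$ with $(\overline X_1,\ldots,\overline X_p):=C''$ to $C:=C'_{\mid\overline X_1}\cdot C'_{\mid\overline X_2}\cdots C'_{\mid\overline X_p}$ (concatenation), $(i_1,\ldots,i_p):=(|C'_{\mid\overline X_1}|,\ldots,|C'_{\mid\overline X_p}|)$ and $D:=C'$.

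To see that this is well defined and bijective, one checks successively: $\le_{\T'}$ is a quasi-order with $\sim_{\T'}=\sim_\T$ (so Lemma \ref{classes} is respected); $\T'\oprec\T$, the admissibility equality $\sim_{\T/\T'}=\sim_{\T'/\T'}$ following from the observation that the run-index is weakly monotone along any chain for the relation $\Cal R$ defining $\T/\T'$ (because $C$ is a linear extension of $\T$), hence constant along any $\Cal R$-cycle, so that $\T/\T'$ collapses exactly the $\T'$-connected components; and $D\in\lin_{\T'}$, $\overline C\in\lin_{\T/\T'}$, which reduce, via the same monotonicity and the commutation of restriction with quotient \eqref{restr-quotient}, to the fact that $C^{(j)}=C_{\mid\overline X_j}$ is a linear extension of $\T_{\mid\overline X_j}$. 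That the two maps are mutually inverse rests on the identity $\le_{\T'}\,=\,\le_\T\,\cap\,\{(x,y):x\text{ and }y\text{ lie in the same block of }C''\}$, valid for every $C''\in\lin_{\T/\T'}$: if $x,y$ are in the same block of $C''$ and $x\le_{\T/\T'}y$, then $x\sim_{\T/\T'}y$ (otherwise condition (1) of a linear extension would separate them), hence $x\sim_{\T'/\T'}y$ by admissibility, and on their common $\T'$-connected component $\T$ and $\T'$ agree, so $x\le_\T y$ upgrades to $x\le_{\T'}y$. Assembling the bijection gives $\rho\circ L(\T)=(L\otimes L)\circ\Gamma(\T)$.

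I expect the two admissibility verifications — that the combinatorially defined $\T'$ is $\T$-admissible, and dually that the blocks of every linear extension of $\T/\T'$ are exactly unions of $\T'$-connected components — to be the delicate part, these being where Lemma \ref{classes}, the monotonicity/cycle argument, and the commutation of restriction with quotient are genuinely used. Granting them, identities (1)--(3) reduce to routine bookkeeping with restrictions, concatenations and the explicit formula for $\rho$, and surjectivity is immediate.
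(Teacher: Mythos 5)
Your proposal is correct and follows essentially the same route as the paper's proof: the same characterization of $\lin_{\T_1\T_2}$ for (1), the same bijection $(C,i)\leftrightarrow(Y,C_{\mid X\setminus Y},C_{\mid Y})$ for (2), and for (3) the same correspondence $(C,(i_1,\ldots,i_p),D)\leftrightarrow(\T',C',C'')$ in which your $\T'$ (defined by restricting $\le_\T$ to pairs within a common run) coincides with the paper's $\T_{\mid X''_1}\cdots\T_{\mid X''_p}$, your monotonicity-of-the-run-index argument is the paper's admissibility check, and your key identity $\le_{\T'}=\le_\T\cap\{(x,y):x,y\text{ in the same block of }C''\}$ is the paper's fourth-step lemma. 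The surjectivity construction via the chain of coarse blocks is also the one used in the paper.
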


\begin{proof} {\it First step}. Let us prove the following lemma: if $Y\subseteq X$, $\T\in \topo_X$ and $C \in \lin_\T$, then $C_{\mid Y}\in \lin_{\T_{\mid Y}}$.

We put $C=(X_1,\ldots,X_k)$ and $C_{\mid Y}=(X_{m_1}\cap Y,\ldots, X_{m_l}\cap Y)=(Y_1,\ldots,Y_l)$.
Let $i,j \in [l]$, $x \in Y_i$, $y\in Y_j$. If $x<_{\T\srestr Y} y$, then $x<_\T y$, so $m_i<m_j$, and finally $i<j$.
If $x\sim_{\T_{\mid Y}} y$, then $x\sim_\T y$, so $m_i=m_j$, and finally $i=j$.\\

 {\it Second step.} We prove (1). Let $\T\in \topo_X$ and $\T'\in \topo_Y$. Let us prove that:
$$\lin_{\T\T'}=\{C\in \setcomp_{X\sqcup Y} \mid C_{\mid X}\in \lin_\T,C_{\mid Y} \in \lin_{\T'}\}.$$
As $\T\T'_{\mid X}=\T$ and $\T\T'_{\mid Y}=\T'$, the first step implies that inclusion $\subseteq$ holds.
Moreover, if $x<_{\T\T'} y$ or $x\sim_{\T\T'} y$ in $X\sqcup Y$, then $(x,y) \in X^2$ or $(x,y)\in Y^2$, which implies the second inclusion.
Consequently:
\begin{align*}
L(\T\T')&=\sum_{C,\,C_{\mid X}\in \lin_\T,C_{\mid Y}\in \lin_{\T'}}C\\
&=\sum_{C'\in \lin_\T}\ \sum_{C''\in \lin_{\T'}} \ \sum_{C,\,C_{\mid X}=C', C_{\mid Y}=C''}C\\
&=\sum_{C'\in \lin_\T}\ \sum_{C''\in \lin_{\T'}} C'C''\\
&=L(\T)L(\T').
\end{align*} 

{\it Third step.} We prove (2). Let $\T$ be a topology on a set $X$. We put:
\begin{align*}
A&=\{(Y,C_1,C_2)\mid Y\in \T,C_1\in \lin_{\T_{\mid{X\setminus Y}}},C_2\in \lin_{\T_{\mid Y}}\},\\
B&=\{(C,i)\mid C\in \lin_\T,0\leq i\leq lg(C)\},
\end{align*}
which gives:
\begin{align*}
(L\otimes L)\circ \Delta(\T)&=\sum_{(Y,C_1,C_2)\in A}C_1\otimes C_2,\\
\Delta \circ L(\T)&=\sum_{((X_1,\ldots,X_k),i)\in B}(X_1,\ldots,X_i)\otimes (X_{i+1},\ldots,X_k).
\end{align*}
We define two maps:
\begin{align*}
f&:\left\{\begin{array}{rcl}
A&\longrightarrow&B\\
(Y,(X_1,\ldots,X_k),(X_{k+1},\ldots,X_{k+l}))&\longrightarrow&((X_1,\ldots,X_{k+l}),k),
\end{array}\right.\\ \\
g&:\left\{\begin{array}{rcl}
B&\longrightarrow&A\\
((X_1,\ldots,X_k),i)&\longrightarrow&(X_{i+1}\sqcup\ldots\sqcup X_i, (X_1,\ldots,X_i),(X_{i+1},\ldots,X_k)).
\end{array}\right. \end{align*}
Let us prove that $f$ is well-defined. If $(Y,C_1,C_2) \in A$, we put $C_1=(X_1,\ldots,X_k)$, $C_2=(X_{k+1},\ldots,X_{k+l})$, and $C=(X_1,\ldots,X_{k+l})$.
 Let us prove that $C\in \lin_{\T\T'}$. Let $x\in X_i$, $y\in X_j$. If $x<_\T y$, as $Y$ is an open set of $\T$, there are only three possibilities:
\begin{itemize}
\item $x,y \in Y$. As $C_2$ is a linear extension of $\T_{\mid Y}$, $i<j$.
\item $x,y \in X\setminus Y$. As $C_1$ is a linear extension of $\T_{\mid X\setminus Y}$, $i<j$.
\item $x\in X\setminus Y$ and $y\in Y$. Then $i\leq k<j$.
\end{itemize}
If  $x\sim_\T y$, as $Y$ is an open set of $\T$, so is a union of equivalence classes of $\sim_\T$, there are only two possibilities:
\begin{itemize}
\item $x,y \in Y$. As $C_2$ is a linear extension of $\T_{\mid Y}$, $i=j$.
\item $x,y \in X\setminus Y$. As $C_1$ is a linear extension of $\T_{\mid X\setminus Y}$, $i=j$.
\end{itemize}
So $f(Y,C_1,C_2)\in B$.

Let us prove that $g$ is well-defined. If $((X_1,\ldots,X_k),i)\in B$, we put $f((X_1,\ldots,X_k),i)=(Y,C_1,C_2)$.
$Y$ is an open set of $\T$: let $x \in Y$, $x\in X$, such that $x\leq_\T y$. We assume that $x\in X_j$, with $j\geq i$, and $y\in X_k$.
If $x \sim_\T y$, then $j=k\geq i$ and $y\in Y$. If $x<_\T y$, then $i\leq j<k$, so $y\in Y$. 
Moreover, $C_1=(X_1,\ldots,X_i)=C_{\mid X\setminus Y}$ and $C_2=(X_{i+1},\ldots,X_k)=C_{\mid Y}$. By the lemma of the first point,
$C_1\in \lin_{\T_{\mid X\setminus Y}}$ and $C_2\in \lin_{\T_{\mid Y}}$. So $(Y,C_1,C_2)\in A$. 

Moreover:
\begin{align*}
f\circ g((X_1,\ldots,X_k),i)&=f(X_{i+1}\sqcup \ldots \sqcup X_k,(X_1,\ldots,X_i),(X_{i+1},\ldots,X_k))\\
&=((X_1,\ldots,X_k),i);\\
g\circ f(Y,C_1,C_2)&=g(C_1.C_2,lg(C_1))\\
&=(Y,C_1,C_2).
\end{align*}
So $f$ and $g$ are bijections, inverse one from each other. Consequently:
\begin{align*}
(L\otimes L)\circ \Delta(\T)&=\sum_{(Y,C_1,C_2)\in A}C_1\otimes C_2\\
&=\sum_{((X_1,\ldots,X_k),i)\in B}(X_1,\ldots,X_i)\otimes (X_{i+1},\ldots,X_k)\\
&=\Delta\circ L(\T).
\end{align*}

{\it Fourth step.} Let  $A$ be the set of triples $(C,(i_1,\ldots,i_p),C')$ such that:
\begin{enumerate}
\item $C=(X_1,\ldots,X_k)$ and $C'=(X'_1,\ldots,X'_p)$ are set compositions of $X$, of respective length $k$ and $p$.
\item For all $j$, $i_j>0$ and $i_1+\ldots+i_p=k$.
\item For all $j$, $C'_{\mid X_{i_1+\ldots+i_{j-1}+1}\sqcup \ldots \sqcup X_{i_1+\ldots+i_j}}=(X_{i_1+\ldots+i_{j-1}+1},\ldots,X_{i_1+\ldots+i_j})$.
\end{enumerate}
Let $B$ be the set of triples $(\T',C',C'')$ such that:
\begin{enumerate}
\item $\T'\oprec \T$.
\item $C'$ is a linear extension of $\T'$.
\item $C''$ is a linear extension of $\T/\T'$.
\end{enumerate}
Then:
\begin{align*}
\rho \circ L(\T)&=\sum_{((X_1,\ldots,X_p),(i_1,\ldots,i_p),C') \in A} C'\otimes
(X_1\sqcup \ldots \sqcup X_{i_1},\ldots,X_{i_1+\ldots+i_{p-1}+1}\sqcup \ldots \sqcup  X_{i_1+\ldots+i_p}),\\
(L\otimes L)\circ \Gamma(\T)&=\sum_{(\T,C,C')\in B}C'\otimes C''.
\end{align*}
We now prove the following lemma: if $(\T,C',C'') \in B$, with $C''=(X''_1,\ldots,X''_q)$, then:
$$\T'=\T_{\mid X''_1}\ldots \T_{\mid X''_q}.$$
We first show that for all $i$, $\T'_{\mid X''_i}=\T_{\mid X''_i}$. Let us assume that $x,y \in X''_i$, such that $x\leq_\T y$.
Then $x\leq_{\T/\T'} y$. If $x<_{\T/\T'} y$, as $C''$ is a linear extension of $\T/\T'$, we would have $x \in X''_a$, $y\in X''_b$, with $a<b$:
this is a contradiction. So $x\sim_{\T/\T'} y$. As $\T'\oprec \T$, $x\sim_{\T'/\T'} y$, so $x$ and $y$ are in the same connected component $Y$ of $\T'$.
As $\T'\oprec \T$, $x\leq_{\T_{\mid Y}} y$, so $x\leq_{\T'_{\mid Y}}y$, so $x\leq_{\T'} y$. Conversely, if $x\leq_{\T'} y$, as $\T'\prec \T$,
$x\leq_\T y$. 

Let $x \in X''_i$, $y\in X''_j$, with $i<j$. As $C''$ is a linear extension of $\T/\T'$, we do not have $x\sim_{\T/\T'} y$, and, as $\T'\oprec \T$,
we do not have $x\sim_{\T'/\T'} y$. Consequently:
$$\T'=\T'_{\mid X''_1}\ldots \T'_{\mid X''_q}=\T_{\mid X''_1}\ldots \T_{\mid X''_q}.$$

{\it Fifth step.} We prove (3). We define a map $f:A\longrightarrow B$ by $f(C,(i_1,\ldots,i_p),C')=(\T',C',C'')$, where:
\begin{enumerate}
\item $C''=(X_1\sqcup \ldots \sqcup X_{i_1},\ldots,X_{i_1+\ldots+i_{p-1}+1}\sqcup \ldots \sqcup  X_{i_1+\ldots+i_p})$.
\item $\T'=\T_{\mid X''_1}\ldots \T_{\mid X''_p}$.
\end{enumerate}
Let us prove that $f$ is well-defined. First, $\T'\prec \T$. If $Y\subseteq X$ is connected for $\T'$, then necessarily there exists a $i$,
such that $Y\subseteq X''_i$. Then $\T'_{\mid Y}=(\T'_{\mid X''_i})_{\mid Y}=(\T_{\mid X''_i})_{\mid Y}=\T_{\mid Y}$.

Let us assume that $x\sim_{\T/\T'} y$. There exists a sequence of elements of $x$ such that:
$$x\leq_\T x_1 \geq_{\T'} y_1 \leq_\T x_2 \geq_{\T'}\ldots \leq_\T x_r \geq_{\T'} y.$$
 If $y_a \in X''_j$, as $C$ is a linear extension of $\T$, necessarily $x_{a+1} \in X''_k$, with $k\geq j$.
 If $x_a \in X''_j$, as $x_a \geq_{\T'} y_a$, $y_a \in X''_j$. Consequently, if $x \in X''_i$, then 
$x_1,y_1,\ldots,x_r,y \in X''_i\sqcup \ldots X''_p$. By symmetry of $x$ and $y$, $x,x_1,y_1,\ldots,x_r,y \in X''_i$. So, by restriction to $X''_i$:
$$x\leq_{\T'} x_1 \geq_{\T'} y_1 \leq_{\T'} x_2 \geq_{\T'}\ldots \leq_{\T'} x_r \geq_{\T'} y.$$
This gives $x \sim_{\\T'/\T'} y$: we finally obtain that $\T'\oprec \T$.\\

By the lemma of the first step, $C_{\mid X''_i}$ is a linear extension of $\T_{\mid X''_i}$, so, by definition of $A$,
$C'$ is a linear extension of $\T_{\mid X''_1}\ldots \T_{\mid X''_p}=\T'$. \\

Let us assume that $x<_{\T/\T'} y$. Let $i,j$ such that $x \in X''_i$, $y\in X''_j$. 
Up to a change of $x\in X''_i$, $y\in X''_j$, we can assume that $x<_\T y$.
If $i=j$, then by restriction $x<_{\T'} y$, so $x\sim_{\T'/\T'} y$ and finally $x\sim_{\T/\T'} y$, as $\T'\oprec \T$: this is a contradiction.
Hence, $i\neq j$, and $x<_\T y$; as $C$ is linear extension of $\T$, necessarily $i<j$.

Let us assume that $x\leq_{\T/\T'} y$. Let $i,j$ such that $x \in X''_i$, $y\in X''_j$. By definition of $\leq_{\T/\T'}$, we can assume that
$x\leq_\T y$ or $x\sim_{\T'} y$. In the first case, as $C$ is a linear extension of $\T$, we have $x \in X_a$, $y\in X_b$, with $a\leq b$,
so $i\leq j$. In the second case, $i=j$. Consequently, if $x\sim_{\T/\T'} y$, then $i\leq j$ and $j\leq i$, so $i=j$.
We proved that $C'' \in \lin_{\T/\T'}$.\\

We now consider the map $g:B\longrightarrow A$, defined  by $g(\T',C',C'')=(C,(i_1,\ldots,i_p),C')$, with:
\begin{enumerate}
\item $C=C'_{\mid X''_1}\ldots C'_{\mid X''_p}$, if $C''=(X''_1,\ldots,X''_p)$.
\item For all $j$, $i_j=|X''_j|$.
\end{enumerate}
Let us prove that $g$ is well-defined. Let us assume $x<_\T y$, with $x \in X''_i$, $y\in X''_j$. Let $a,b$ such that $x \in X_a$, $y\in X_b$,
if $C=(X_1,\ldots,X_k)$. If $i=j$, then by the lemma of the fourth step, $x<_{\T'} y$. As $C'$ is a linear extension of $\T'$,
$x \in C'_c$, $y\in C'_d$, with $c<d$. By definition of $C$, $a<b$. If $i\neq j$, then $x\leq_{\T/\T'} y$; as $C''$ is a linear extension
of $\T/\T'$, $i<j$, so $a<b$. 
If $x\sim_\T y$, a similar argument proves that $x,y\in X_a$ for a certain $a$. So $C \in \lin_\T$. Moreover, for all $j$:
$$C'_{\mid C_{i_1+\ldots+i_{j-1}+1}\sqcup\ldots \sqcup C_{i_1+\ldots+i_j}}
=C'_{\mid C''_j}=C_{\mid C''_j}=(C_{i_1+\ldots+i_{j-1}+1},\ldots,C_{i_1+\ldots+i_j}).$$
So $g$ is well-defined. The lemma of the fourth step implies that $f\circ g=Id_B$, and by definition of $A$, $g\circ f=Id_A$, so
$f$ and $g$ are bijective, inverse one from each other. Finally:
\begin{align*}
(L\otimes L)\circ \Gamma(\T)&=\sum_{(\T',C',C'')\in B} C'\otimes C''\\
&=\sum_{((X_1,\ldots,X_p),(i_1,\ldots,i_p),C') \in A} C'\otimes
(X_1\sqcup \ldots \sqcup X_{i_1},\ldots,X_{i_1+\ldots+i_{p-1}+1}\sqcup \ldots \sqcup  X_{i_1+\ldots+i_p})\\
&=\rho \circ L(\T).
\end{align*}
 
 {\it Last step.} It remains to prove the surjectivity of $L$. Let $(X_1,\ldots,X_k)$ be a set composition of $X$. Let $\T$ be the topology whose open sets
 are $X_i\sqcup\ldots X_k$, for $1\leq i\leq k$, and $\emptyset$. Then $\T$ has a unique linear extension, which is $C$, so $L(\T)=C$. \end{proof}

{\bf Examples.} If $X=E\sqcup F=A\sqcup A\sqcup C$ are two partitions of $X$:
\begin{align*}
L(\tdun{$X$})&=(X),\\
L(\tddeux{$E$}{$F$})&=(E,F),\\
L(\tdun{$E$}\tdun{$F$})&=(E,F)+(F,E)+(E\sqcup F),\\
L(\tdtroisun{$A$}{$C$}{$B$})&=(A,B,C)+(A,C,B)+(A,B\sqcup C),\\
L(\tdtroisdeux{$A$}{$B$}{$C$})&=(A,B,C),\\
L(\pdtroisun{$A$}{$B$}{$C$})&=(B,C,A)+(C,B,A)+(B\sqcup C,A),\\
L(\tddeux{$A$}{$B$}\tdun{$C$})&=(A,B,C)+(A,C,B)+(C,A,B)+(A\sqcup C,B)+(A,B\sqcup C),\\
L(\tdun{$A$}\tdun{$B$}\tdun{$C$})&=(A,B,C)+(A,C,B)+(B,A,C)+(B,C,A)+(C,A,B)+(C,B,A)\\
&+(A\sqcup B,C)+(A\sqcup C,B)+(B\sqcup C,A)+(A,B\sqcup C)+(B,A\sqcup C)+(C,A\sqcup B)\\
&+(A\sqcup B\sqcup C).
\end{align*}

Now we consider isomorphism classes of finite topologies and set compositions.
Let $\T$ be a topology on a finite set $X$, and let $Z$ be an infinite, totally ordered alphabet. A linear extension of $\T$ is map $f:X\longrightarrow Z$,
such that:
\begin{enumerate}
\item $x<_\T y$ in $X$ $\Longrightarrow$ $f(x)<f(y)$.
\item $w\sim_\T y$ in $X$ $\Longrightarrow$ $f(x)=f(y)$.
\end{enumerate}
The set of linear extensions of $\T$ with values in $Z$ is denoted by $\lin_\T(Z)$.
\begin{thm}
Let $Z$ be an infinite, denumerable, totally ordered alphabet. Identifying $\QSym(Z)$ and $\QSym$, we define a map:
$$\lambda:\left\{\begin{array}{rcl}
\h&\longrightarrow&\QSym\\
\T\in \topo_X&\longrightarrow&\displaystyle \sum_{f\in \lin_\T(Z)} \prod_{x\in X} f(x).
\end{array}\right.$$
Then $\lambda$ is a Hopf algebra morphism, compatible with internal coproducts of $\h$ and $\QSym$.
\end{thm}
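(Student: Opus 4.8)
The plan is to recognize $\lambda$ as the composite of the species morphism $L$ of Theorem~\ref{theoextension} with the projection $\pi:\setcomp\to\QSym$ sending a set composition to its type, and then to deduce every required compatibility from the ones already established for $L$. First I would rewrite the defining sum. For $\T\in\topo_X$ and $f\in\lin_\T(Z)$, ordering the nonempty fibres of $f$ by the values they take in $Z$ produces a set composition $C(f)=(X_1,\dots,X_k)$ of $X$, and the two conditions defining $\lin_\T(Z)$ say precisely that $C(f)\in\lin_\T$. Conversely, for a fixed $C=(X_1,\dots,X_k)\in\lin_\T$ the maps $f$ with $C(f)=C$ correspond bijectively to the strictly increasing sequences $z_1<\dots<z_k$ in $Z$ via $f_{\mid X_i}=z_i$, and for such $f$ one has $\prod_{x\in X}f(x)=z_1^{|X_1|}\cdots z_k^{|X_k|}$; summing over these sequences gives $M_{(|X_1|,\dots,|X_k|)}(Z)=M_{\mop{type}(C)}(Z)$. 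Hence
$$\lambda(\T)=\sum_{C\in\lin_\T}M_{\mop{type}(C)}(Z)\in\QSym(Z),$$
which in passing shows that $\lambda(\T)$ is genuinely quasi-symmetric. Under the identification $\QSym(Z)\cong\QSym$ this reads $\lambda(\T)=\pi\bigl(L(\T)\bigr)$, where $L(\T)=\sum_{C\in\lin_\T}C$. As $L$ is natural in $X$ and $\pi$ only sees the type, the right-hand side depends only on the homeomorphism class of $\T$, so $\lambda$ is well defined on $\h$.

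Next I would record that $\pi$ is itself a morphism for the product, the external coproduct $\Delta$ and the internal coproduct $\rho$; this is exactly the identification noted right after Theorem~\ref{theoextension}, namely that the bialgebra of set compositions taken up to renumbering, with basis $(M_c)$ indexed by compositions, is $\QSym$. One checks it by comparing $\pi(C'C'')$, $(\pi\otimes\pi)\Delta(C)$ and $(\pi\otimes\pi)\rho(C)$ with the displayed formulas for $M_cM_{c'}$, $\Delta(M_c)$ and $\rho(M_c)$, and it clearly sends units to units and counits to counits.

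Granting these two facts, the statement is formal. The product, the external coproduct $\Delta$ and the internal coproduct $\Gamma$ on $\h$ are the operations induced on homeomorphism classes by the corresponding species operations on $\topo$, so for topologies $\T$ and $\T'$ I would compute
$$\lambda(\T\T')=\pi\bigl(L(\T\T')\bigr)=\pi\bigl(L(\T)L(\T')\bigr)=\lambda(\T)\lambda(\T'),$$
$$\Delta\bigl(\lambda(\T)\bigr)=(\pi\otimes\pi)\Delta\bigl(L(\T)\bigr)=(\pi\otimes\pi)(L\otimes L)\Delta(\T)=(\lambda\otimes\lambda)\Delta(\T),$$
$$\rho\bigl(\lambda(\T)\bigr)=(\pi\otimes\pi)\rho\bigl(L(\T)\bigr)=(\pi\otimes\pi)(L\otimes L)\Gamma(\T)=(\lambda\otimes\lambda)\Gamma(\T),$$
invoking parts (1), (2) and (3) of Theorem~\ref{theoextension} together with the morphism properties of $\pi$. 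Since $\lambda(\un)=1$ and $\lambda$ carries $\h_n$ into the degree-$n$ component of $\QSym$, it intertwines the counits; and since $\h$ and $\QSym$ are graded connected bialgebras, compatibility with the antipodes is then automatic. Therefore $\lambda$ is a Hopf algebra morphism which is moreover compatible with the internal coproducts.

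I do not expect a genuine obstacle here, since everything rests on Theorem~\ref{theoextension}. The only places where actual work occurs are the bookkeeping in the first step — reorganizing $\sum_{f}\prod_{x}f(x)$ according to the underlying set composition so that it lands in the monomial basis — and making the second step explicit, i.e.\ checking by a direct comparison of formulas that the type projection $\setcomp\to\QSym$ respects the product, $\Delta$ and $\rho$.
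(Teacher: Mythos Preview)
Your proposal is correct and follows exactly the approach the paper intends: the theorem is stated without proof because it is meant as an immediate consequence of Theorem~\ref{theoextension} together with the identification, spelled out just before the theorem, of $\QSym$ with the quotient of $\setcomp$ by the ``type'' projection. You have simply made explicit the factorization $\lambda=\pi\circ L$ and the verification that $\pi$ respects the product, $\Delta$ and $\rho$, which the paper summarizes in the sentence ``direct computations shows this is $\QSym$.''
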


{\bf Examples.}
\begin{align*}
\lambda(\tdun{$a$})&=M_{(a)},\\
\lambda(\tddeux{$a$}{$b$})&=M_{(a,b)},\\
\lambda(\tdun{$a$}\tdun{$b$})&=M_{(a,b)}+M_{(b,a)}+M_{(a+b)},\\
\lambda(\tdtroisun{$a$}{$c$}{$b$})&=M_{(a,b,c)}+M_{(a,c,b)}+M_{(a,b+c)},\\
\lambda(\tdtroisdeux{$a$}{$b$}{$c$})&=M_{(a,b,c)},\\
\lambda(\pdtroisun{$a$}{$b$}{$c$})&=M_{(b,c,a)}+M_{(c,b,a)}+M_{(b+c,a)},\\
\lambda(\tddeux{$a$}{$b$}\tdun{$c$})&=M_{(a,b,c)}+M_{(a,c,b)}+M_{(c,a,b)}+M_{(a+c,b)}+M_{(a,b+c)},\\
\lambda(\tdun{$a$}\tdun{$b$}\tdun{$c$})&=M_{(a,b,c)}+M_{(a,c,b)}+M_{(b,a,c)}+M_{(b,c,a)}+M_{(c,a,b)}+M_{(c,b,a)}\\
&+M_{(a+b,c)}+M_{(a+c,b)}+M_{(b+c,a)}+M_{(a,b+c)}+M_{(b,a+c)}+M_{(c,a+b)}+M_{(a+b+c)}.
\end{align*}

\noindent Restricting to finite topologies and set compositions on sets $[n]$, we obtain the following theorem:
\begin{thm}
Let $Z$ be an infinite, totally ordered alphabet. Identifying $\WQSym(Z)$ and $\WQSym$, we define a map:
$$\Lambda:\left\{\begin{array}{rcl}
\h_{\mathbf{T}}&\longrightarrow&\WQSym\\
\T\in \topo_{[n]}&\longrightarrow&\displaystyle \sum_{f\in \lin_\T(Z)} f(1)\ldots f(n).
\end{array}\right.$$
Then $\Lambda$ is a Hopf algebra morphism, compatible with internal coproducts of $\h_{\mathbf{T}}$ and $\WQSym$.
\end{thm}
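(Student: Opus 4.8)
The plan is to recognize $\Lambda$ as the composite of the species morphism $L$ of Theorem~\ref{theoextension} with the identification, recalled just before the statement, of the species $\setcomp$ restricted to the sets $[n]$ with $\WQSym$. First I would reorganize the sum $\Lambda(\T)=\sum_{f\in\lin_\T(Z)}f(1)\cdots f(n)$ by grouping the maps $f$ according to their collection of level sets. If $z_1<\cdots<z_k$ are the values actually attained by $f$, then $C_f:=(f^{-1}(z_1),\dots,f^{-1}(z_k))$ is a set composition of $[n]$, and the two conditions defining a linear extension $f\in\lin_\T(Z)$ are literally the two conditions defining $C_f\in\lin_\T$. For a fixed $C=(X_1,\dots,X_k)\in\lin_\T$, the maps $f$ with $C_f=C$ are in bijection with the strictly increasing injections $g:[k]\hookrightarrow Z$ (via $f(i)=g(j)$ for $i\in X_j$), and the resulting words $f(1)\cdots f(n)$ run exactly over the noncommutative monomials whose packed word is $w(C)$, where $w(C)$ is the packed word attached to $C$ by $w(C)(i)=j\iff i\in X_j$. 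Hence $\sum_{f:\,C_f=C}f(1)\cdots f(n)=M_{w(C)}(Z)$, so that $\Lambda(\T)=\sum_{C\in\lin_\T}M_{w(C)}$; that is, $\Lambda=\iota\circ L$, where $\iota$ is the identification $\setcomp_{[n]}\cong\WQSym$, $C\mapsto M_{w(C)}$.

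Next I would check that this factorization respects all the structures in play. Because $L$ is a natural transformation of species, it commutes with every relabelling, in particular with the standardization bijections used (i) to identify a subset $I\subseteq[n]$ with $[|I|]$ and (ii) to define the product on $[n+n']$ by shifting the second factor; concretely $L(\mop{Std}(\T))=\mop{Std}(L(\T))$ and $L(\T\T')=L(\T)L(\T')$ in the shifted sense. This is exactly what lets $L$ descend to a map $\h_{\mathbf{T}}\to\WQSym$ carrying the three identities of Theorem~\ref{theoextension} with it. The same naturality underlies the assertion, recalled before the statement, that $\iota$ is an isomorphism of $\setcomp_{[n]}$ — with its species product, external coproduct $\Delta$ and internal coproduct $\rho$ — onto $\WQSym$ with its product, coproduct and internal coproduct, so I would simply invoke that identification. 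Putting these together, the three identities of Theorem~\ref{theoextension} transport to $\Lambda(\T\T')=\Lambda(\T)\Lambda(\T')$, $\Delta\circ\Lambda=(\Lambda\otimes\Lambda)\circ\Delta$, and $\rho\circ\Lambda=(\Lambda\otimes\Lambda)\circ\Gamma$, with $\Lambda$ also sending the unit to the unit and being compatible with the counits.

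Finally, $\h_{\mathbf{T}}$ and $\WQSym$ are both graded connected bialgebras for $\Delta$ and $\Lambda$ is a graded bialgebra morphism, so it automatically commutes with the antipodes and is therefore a Hopf algebra morphism; the last of the three transported identities is precisely the claimed compatibility with the internal coproducts. The one step I expect to need care is the bookkeeping of the first paragraph — matching exactly the grouping of the maps $f$ by level-set composition with the definition of $M_w(Z)$, and verifying that restriction followed by standardization of a finite topology corresponds under $L$ to restriction followed by standardization of its set of linear extensions, so that the relevant square commutes already at the level of $\h_{\mathbf{T}}$ and not merely at the level of the species $\topo$. After that, everything is formal; indeed this theorem is the packed-word refinement of the preceding $\QSym$ statement, obtained by the identical argument with $\setcomp$ up to renumbering, respectively $\QSym$, in place of $\setcomp_{[n]}$, respectively $\WQSym$.
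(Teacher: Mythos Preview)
Your proposal is correct and follows exactly the route the paper intends: the paper's own ``proof'' is the single sentence ``Restricting to finite topologies and set compositions on sets $[n]$, we obtain the following theorem,'' i.e.\ deduce everything from Theorem~\ref{theoextension} via the identification of $\setcomp_{[n]}$ with $\WQSym$. You have simply unpacked that sentence --- the factorization $\Lambda=\iota\circ L$ through the level-set regrouping, the naturality of $L$ to handle the shift in the product and the standardization in $\Delta$, and the automatic antipode compatibility from graded connectedness --- so there is nothing to add.
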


{\bf Examples.}
\begin{align*}
\Lambda(\tdun{$1$})&=M_{(1)},\\
\Lambda(\tddeux{$1$}{$2$})&=M_{(1,2)},\\
\Lambda(\tddeux{$2$}{$1$})&=M_{(2,1)},\\
\Lambda(\tdun{$1$}\tdun{$2$})&=M_{(1,2)}+M_{(2,1)}+M_{(1,1)},\\
\Lambda(\tdtroisun{$1$}{$2$}{$3$})&=M_{(1,2,3)}+M_{(1,3,2)}+M_{(1,2,2)},\\
\Lambda(\tdtroisun{$2$}{$3$}{$1$})&=M_{(2,1,3)}+M_{(3,1,2)}+M_{(2,1,2)},\\
\Lambda(\tdtroisun{$3$}{$2$}{$1$})&=M_{(2,3,1)}+M_{(3,2,1)}+M_{(2,2,1)},\\
\Lambda(\tdtroisdeux{$1$}{$2$}{$3$})&=M_{(1,2,3)},\\
\Lambda(\tdtroisdeux{$2$}{$3$}{$1$})&=M_{(3,1,2)},\\
\Lambda(\tdtroisdeux{$3$}{$1$}{$2$})&=M_{(2,3,1)},\\
\Lambda(\pdtroisun{$1$}{$2$}{$3$})&=M_{(3,1,2)}+M_{(3,2,1)}+M_{(2,1,1)},\\
\Lambda(\pdtroisun{$2$}{$1$}{$3$})&=M_{(1,3,2)}+M_{(2,3,1)}+M_{(1,2,1)},\\
\Lambda(\pdtroisun{$3$}{$1$}{$2$})&=M_{(1,2,3)}+M_{(2,1,3)}+M_{(1,1,2)},\\
\Lambda(\tddeux{$1$}{$2$}\tdun{$3$})&=M_{(1,2,3)}+M_{(1,3,2)}+M_{(2,3,1)}+M_{(1,2,1)}+M_{(1,2,2)},\\
\Lambda(\tdun{$1$}\tdun{$2$}\tdun{$3$})&=M_{(1,2,3)}+M_{(1,3,2)}+M_{(2,1,3)}+M_{(2,3,1)}+M_{(3,1,2)}+M_{(3,2,1)}\\
&+M_{(1,1,2)}+M_{(1,2,1)}+M_{(2,1,1)}+M_{(1,2,2)}+M_{(2,1,2)}+M_{(2,2,1)}+M_{(1,1,1)}.
\end{align*}

\section{Quasi-ormould composition}\label{sect:moulds}

A mould is a collection $M_{\bullet} = \{
M^{\tmmathbf{\omega}} \}$ of elements of some commutative algebra
$\mathcal{A}$, indexed by finite sequences $\tmmathbf{\omega}= ( \omega_1,
\ldots, \omega_r)$ of elements of a set $\Omega$; equivalently, it is an
$\mathcal{A}$--valued function on the set of words $\omega_1 \ldots \omega_r$
in the alphabet $\Omega$. In what follows, the alphabet is in fact the
underlying set of an additive semi--group, a typical example in the
applications being the set of positive integers $\Omega =\mathbb{N}_{>
0} \nosymbol$. We can already notice that from the outset, moulds involve
combinatorial objects which are both labelled and decorated: the labels are
integers belonging to some $[ r]$ and the decorations belong to $\Omega$. When
the values of a mould $M^{\bullet}$ are in fact independant from any set
$\Omega$, $M^{\bullet}$ is said to be {\tmem{of constant type.}}

In the context in which they originated, namely the classification of
dynamical sytems, they naturally appear matched with dual objects, named
comoulds, in expansions of the following form:

\[ F = \sum M^{\tmmathbf{\omega}} B_{\tmmathbf{\omega}} = \sum_{r \geqslant 0}
\sum_{\mathbf{\omega}
= ( \omega_1, \ldots, \omega_r)} 
 M^{\tmmathbf{\omega}} B_{\tmmathbf{\omega}} \]

A comould $B_{\bullet} = \{ B_{\tmmathbf{\omega}} \}$ is a collection, indexed
by sequences $\tmmathbf{\omega}$ as above, of elements of some bialgebra $(
\mathcal{B}, +, ., \sigma)$ , and such expansions, known as mould--comould
{\tmem{contractions}}, make sense in the completed algebra spanned by the
$B_{\tmmathbf{\omega}}$, with respect to the gradings given by the length of
sequences (other gradings may be relevant). In most situations, the
$B_{\tmmathbf{\omega}}$ are products of some building blocks $B_{\omega}$
($\omega \in \Omega$) : $B_{\tmmathbf{\omega}} = B_{\omega_r} \ldots
B_{\omega_1}$ ; the building blocks themselves are abstracted from the
dynamical system under study and are mapped to ordinary differential operators
acting on spaces of formal series, through some evaluation morphism (\cite{Eca92,
FM14}).

Accordingly, these expansions can be realized as elements of completions of
huge linear spaces of operators, typically $\tmop{End} ( \mathbb{C} [ [
x]])$, and they are naturally endowed with a linear structure and two
non--linear operations, a product $\times$ and a composition product $\circ$.
Indeed, for a given comould $B_{\bullet}$ and two moulds $M^{\bullet}$ and
$N^{\bullet}$, the product of the operators associated respectively to
$N^{\bullet}$ and $M^{\bullet}$ can be expanded as a contraction with
$B_{\bullet}$, yielding a new mould $P^{\bullet} = M^{\bullet} \times
N^{\bullet}$ :
\[ \left( \sum N^{\tmmathbf{\omega}} B_{\tmmathbf{\omega}} \right) \left( \sum
   M^{\tmmathbf{\omega}} B_{\tmmathbf{\omega}} \right) = \sum
   P^{\tmmathbf{\omega}} B_{\tmmathbf{\omega}} = \sum_{r \geqslant 0}
   \sum_{\tmmathbf{\omega}= ( \omega_1, \ldots, \omega_r)} 
   P^{\tmmathbf{\omega}} B_{\tmmathbf{\omega}} \]
and the formula giving the components of the product mould $P_{\bullet}$ is as
follows:
\[ P^{( \omega_1, \ldots, \omega_r)} = \sum M^{( \omega_1, \ldots, \omega_i)}
   N^{( \omega_{i + 1}, \ldots, \omega_r)} \]
The product is obviously associative, non commutative in general, and
distributive over the sum.

Beside this product of operators, we can also use some given mould $M$
{\tmem{to change the set of letters}} $B_{\tmmathbf{\omega}}$ and this will
give us the composition $\circ$ of moulds, $B_{\bullet} \longrightarrow
C_{\bullet} \nocomma \tmop{with}$:
\[ C_{\omega_0} = \sum_{\| \tmmathbf{\omega} \| = \omega_0}
   M^{\tmmathbf{\omega}} B_{\tmmathbf{\omega}} \]
where the norm of the sequence $\tmmathbf{\omega}$ is by definition $\| (
\omega_1, \ldots, \omega_r) \| = \omega_1 + \ldots + \omega_r$ .

Performing this natural change of alphabets \ successively with two moulds
$M^{\bullet}$ and $N^{\bullet}$ , amounts to a change of alphabet with respect
to a mould $Q^{\bullet} = M^{\bullet} \circ N^{\bullet}$ which is given by:
\[ Q^{( \omega_1, \ldots, \omega_r)} = \sum M^{( \| \tmmathbf{\omega}^1 \|,
   \ldots \| \tmmathbf{\omega}^s \|)} N^{\tmmathbf{\omega}^1} \ldots
   N^{\tmmathbf{\omega}^s} \]
the sum being performed over all the ways of obtaining the sequence
$\tmmathbf{\omega}$ by concatenation of the subsequences $\tmmathbf{\omega}^i$
: $\tmmathbf{\omega}=\tmmathbf{\omega}^1 \ldots \tmmathbf{\omega}^s$ .

The composition product is also associative, non commutative in general, and
right--distributive over the sum and product. It is worth noticing that the
operation of {\tmem{mould composition}} involve {\tmem{compositions
of integers}}, which is unsurprising if we think that the
operations on constant--type moulds behave exactly as the sums, product and
compositions of formal series.

In practice, the building blocks $B_{\omega}$ are \ such that their coproducts
generally fall into two categories, according to the nature of the system: the
$B_{\omega}$ are derivations ($\sigma ( B_{\omega}) = B_{\omega} \otimes 1 + 1
\otimes B_{\omega} $) in the case of dynamical systems with continuous time
and of divided powers type ($\sigma ( B_{\omega}) = \sum_{\omega_1 +
\omega_2 = \omega} B_{\omega_1} \otimes B_{\omega_2} $) in the case of
dynamical systems with dicrete time. In the first case, the comould is called
cosymmetral and in the second one cosymmetrel.

The expectation for mould--comould contractions $F$ to have good algebraic
properties, namely being automorphisms or derivations ($\sigma ( F) = F
\otimes F$ or $\sigma ( F) = F \otimes 1 \noplus + 1 \otimes F$), imposes
symmetry constraints on the moulds which are matched with a given cosymmetral
or cosymmetrel comould, and this directly leads to the following:

\begin{defn}

  A mould $M^{\bullet}$ is called symmetral (resp. alternal) iff
  $M^{\emptyset} = 1$(resp. $0$) and
  \[ M^{\tmmathbf{\omega}^1} M^{\tmmathbf{\omega}^2} =
     \sum_{\tmmathbf{\omega}\in \tmop{Sh} ( \tmmathbf{\omega}^1,
     \tmmathbf{\omega}^2)} M^{\tmmathbf{\omega}} \]
       (resp.
    $$   \sum_{\tmmathbf{\omega}\in \tmop{Sh} ( \tmmathbf{\omega}^1,
     \tmmathbf{\omega}^2)} M^{\tmmathbf{\omega}}=0$$
      if $\tmmathbf{\omega}^1,\tmmathbf{\omega}^2\neq \emptyset$).
  
  A mould $M^{\bullet}$ is called symmetrel (resp. alternel) iff
  $M^{\emptyset} = 1$(resp. $0$) and
  \[ M^{\tmmathbf{\omega}^1} M^{\tmmathbf{\omega}^2} =
     \sum_{\tmmathbf{\omega}\in \tmop{QSh} ( \tmmathbf{\omega}^1,
     \tmmathbf{\omega}^2)} M^{\tmmathbf{\omega}} \]
     (resp.
    $$   \sum_{\tmmathbf{\omega}\in \tmop{QSh} ( \tmmathbf{\omega}^1,
     \tmmathbf{\omega}^2)} M^{\tmmathbf{\omega}}=0$$
      if $\tmmathbf{\omega}^1,\tmmathbf{\omega}^2\neq \emptyset$).

  \ \ $ \tmop{Sh} ( \tmmathbf{\omega}^1,
  \tmmathbf{\omega}^2)$ (resp. $\tmop{QSh} (  \tmmathbf{\omega}^1, \tmmathbf{\omega}^2)$ ) designates all the sequences
  $\tmmathbf{\omega}$ that can be obtained by shuffling (resp. quasishuffling)
  the two sequences $\tmmathbf{\omega}^1$ and $\tmmathbf{\omega}^2$.
\end{defn}

When viewed as linear applications, symmetral (resp. symmetrel) moulds are
characters of the decorated shuffle algebra (resp quasishuffle algebra) and
alternal (resp. alternel moulds) are infinitesimal characters of these Hopf
algebras.

Most properties of stability immediately follow from the definitions and are
summed up in the:

\begin{prop}
  {\tmdummy}
  
  \begin{enumerate}
    \item Symmetral $\times $ Symmetral = Symmetral
  
   \item Symmetrel $\times $ Symmetral = Symmetral
   
    \item Alternal $\circ$ Alternal = Alternal
    
    \item Symmetral $\circ$ Alternal = Symmetral
    
    \item Symmetrel $\circ$ Symmetral = Symmetral
    
    \item Alternel $\circ$ Symmetral = Alternal
    
    \item Alternal $\circ$ Alternel = Alternel
    
    \item Symmetral $\circ$ Alternel = Symmetrel
    
    \item Symmetrel $\circ$ Symmetrel = Symmetrel
    
    \item Alternel $\circ$ Symmetrel = Alternel
  \end{enumerate}
\end{prop}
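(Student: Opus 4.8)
The plan is to translate the four symmetry types into statements about (infinitesimal) characters of two classical Hopf algebras on words; once this is done, the identities involving $\times$ are immediate, eight of the ten identities involving $\circ$ become a ``transport of symmetry type'' statement for comoulds, and the genuine difficulty is concentrated in a single combinatorial lemma. Let $W$ be the linear span of the words in the additive semigroup $\Omega$, equipped with the deconcatenation coproduct $\delta(\omega_1\cdots\omega_r)=\sum_{i=0}^{r}\omega_1\cdots\omega_i\otimes\omega_{i+1}\cdots\omega_r$, the shuffle product $\shu$, and the quasishuffle (stuffle) product $\ast$ attached to the law of $\Omega$; both $(W,\shu,\delta)$ and $(W,\ast,\delta)$ are bialgebras. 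Unwinding the definitions, a mould — viewed as an $\mathcal{A}$-valued linear form on $W$ — is symmetral (resp. alternal) exactly when it is a character (resp. an infinitesimal character) of $(W,\shu,\delta)$, and symmetrel (resp. alternel) exactly when it is a character (resp. an infinitesimal character) of $(W,\ast,\delta)$. Under this identification the mould product $\times$ is the convolution product on $\mathrm{Hom}(W,\mathcal{A})$ dual to $\delta$, while the composition $\circ$ is the convolution dual to the ``arithmetic'' internal coproduct that cuts a word into an ordered list of nonempty factors and records, on one side, the sequence of norms of the factors and, on the other, the factors themselves — this is the mould counterpart of the coproduct $\rho$ on $\setcomp$ and $\WQSym$ (and of $\Gamma$ on $\topo$) studied above.

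With this dictionary, the two identities involving $\times$ reduce to the standard fact that, for each of the two bialgebra structures on $W$, the characters form a group under convolution (so a $\times$-product of two characters of a fixed kind is again a character of that kind) and the infinitesimal characters form a Lie algebra acting on it. For the eight identities about $\circ$ I would argue through Ecalle's comould picture recalled in this section. Starting from a comould $B_\bullet$ with primitive building blocks (cosymmetral), substituting its letters by means of a mould $N$ produces a new comould whose coalgebraic behaviour is governed by the symmetry type of $N$: an alternal $N$ keeps it cosymmetral and a symmetral $N$ turns it cosymmetrel; symmetrically, starting from a cosymmetrel comould, an alternel $N$ returns a cosymmetral one and a symmetrel $N$ a cosymmetrel one.

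The reason this works is that the norm $\|\cdot\|\colon W\to\Omega$ is a morphism of semigroups, hence carries a shuffle of factors to a shuffle of the underlying letters and a quasishuffle of factors to a quasishuffle of letters, up to the contractions prescribed by the law of $\Omega$. Contracting the substituted comould with a character (resp. an infinitesimal character) $M$ of the Hopf structure it now carries produces precisely $M\circ N$, which therefore inherits the ``symmetr/altern'' nature of $M$ and the ``al/el'' flavour of $N$; this is exactly what items 3--10 assert, item by item. In practice I would first check each line on words of length at most $3$ to pin down conventions and signs, and only then run the general argument (using the associativity and right-distributivity of $\circ$ already noted).

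The main obstacle lies in the ``crossed'' lines — items 5, 6, 7 and 8 — in which an ``el'' structure must be manufactured from, or converted into, an ``al'' structure. Everything there hinges on one combinatorial lemma: the arithmetic coproduct above intertwines $\tmop{Sh}$ on the factor-norm words with $\tmop{QSh}$ on the words, and conversely, compatibly with concatenation of factors. Once that lemma is in place, the crossed items follow formally from the character/infinitesimal-character description just as the ``uncrossed'' lines 3, 4, 9 and 10 do. Alternatively, bypassing the comould formalism altogether, each of the ten identities can be proved directly by induction on the length of $\tmmathbf{\omega}$: expand both sides of the relevant (quasi)shuffle relation, use associativity and right-distributivity of $\circ$ to organize the induction step, and match terms. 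I would present the Hopf-algebraic proof in the main text and relegate this last bookkeeping to a remark.
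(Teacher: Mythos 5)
The paper gives no written proof of this proposition (it is introduced by ``Most properties of stability immediately follow from the definitions''), so your write-up is necessarily fuller than the source; but your viewpoint is exactly the one the paper adopts implicitly in the surrounding text: symmetral/symmetrel moulds are characters, and alternal/alternel moulds infinitesimal characters, of the shuffle and quasishuffle bialgebras on $\Omega$-words with deconcatenation, $\times$ is convolution, and $\circ$ is dual to the ``arithmetic'' internal coproduct. Your comould transport argument for items 3--10 is the standard Ecalle-style one, and the four co-type claims you make (alternal/symmetral substitution sends cosymmetral to cosymmetral/cosymmetrel; alternel/symmetrel substitution sends cosymmetrel to cosymmetral/cosymmetrel) are correct and do reduce to the single intertwining lemma you isolate. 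So the approach is sound and consistent with the paper's intent.

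Two caveats. First, item 2 as printed, ``Symmetrel $\times$ Symmetral = Symmetral'', is \emph{not} what your character-group argument proves, and it is in fact false as stated: with $M$ symmetrel, $N$ symmetral and $P=M\times N$, one computes $P^{(a)}P^{(b)}-P^{(a,b)}-P^{(b,a)}=M^{(a+b)}$, which is nonzero in general. The intended statement is surely ``Symmetrel $\times$ Symmetrel = Symmetrel'', which your argument does cover; you should flag this discrepancy explicitly rather than silently prove a different (corrected) statement. Second, as written your text is a plan rather than a proof: the only genuinely nontrivial ingredients --- the lemma that grouping a shuffle (resp.\ quasishuffle) into factors and taking norms yields a quasishuffle (resp.\ shuffle) of the norm sequences, and the fact that contraction against the universal cosymmetral/cosymmetrel comould faithfully detects the symmetry type --- are precisely the steps you defer. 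Writing out that computation (it is the word-level analogue of the compatibility of $\rho$ with the product established for $\setcomp$ earlier in the paper) is what would turn the sketch into a proof; the low-length checks and the appeal to right-distributivity of $\circ$ do not substitute for it.
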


In particular, symmetrel moulds are stable by both the product $\times$ and
the composition product, a fact which amounts to a statement on convolution of
characters for the corresponding coproducts on the algebra based on
$\Omega$--decorated words.

Next, to tackle difficult questions of analytic classification, J. Ecalle had
been driven to reorder mould--comould contractions by a systematic use of trees
(\cite{Eca92}), by considering so--called arborescent moulds,  armoulds for short,
which are indexed by sequences with arborescent partial orders (each element
has at most one antecedent) on the labelling sets $[ r]$.

In this context, the product of armoulds is nothing but the convolution with
respect to Connes--Kreimer coproduct, when separative armoulds are seen as
characters of the relevant Hopf algebra on trees (\cite{FM14}). There is also a
natural definition of composition of armoulds (it appears in particular in
\cite{Men06}), related to another coproduct on the algebra of decorated forests, which
is a decorated version of the coproduct introduced and studied in \cite{CEM11} (see also \cite{Man12}) and
which corresponds to the operation of substitution in the domain of B--series.
This last coproduct involves suppression of edges on a given tree and a notion
of quotient tree which is the one that was to be conveniently generalized to
partial orders and finally to quasi--orders in the present text.

In fact, as mentioned e.g. in the paper \cite{EV04}, the natural operations $+,
\times, \circ$ on (ordinary) moulds and armoulds can be extended to moulds
associated to sequences with a general partial order, the name ormoulds being
coined for such objects by J. Ecalle. An ormould $M^{\sharp}$, with values in the
commutative algebra $\mathcal{A}$, and indexed by elements of a semi--group
$\Omega$, is a collection of elements of $\mathcal{A}$ indexed by orsequences
$\tmmathbf{\omega}^{\sharp}$, namely sequences $\tmmathbf{\omega}= ( \omega_1,
\ldots, \omega_r)$ of elements of $\Omega$ endowed with a quasi--order on the
labelling set $[ r]$. It is indeed possible to give (\cite{Eca_Ormoulds}) quite
natural definitions for the product and composition product of ormoulds,
involving the concept of ``orderable partition'' of a poset
and the general study of ormoulds with Hopf algebraic techniques will be the object
of a separate article.

Moreover, as it is already the case for arborescent moulds, there is a natural
``disordering morphism'' from ordinary moulds to ormoulds, which exists in two
versions, a simple and contracting one, adapted to symmetral and symmetrel
moulds respectively. Unsurprisingly, it is naturally defined in terms of
linear extensions and is coherent with the constructions of
Malvenuto--Reutenauer done in \cite{MR11} in the undecorated case.

Now, all these definitions, constructions, symmetries and operations on
moulds, that have been introduced and exposed by J. Ecalle are in fact very
``robust'' : they can even actually be pushed one step forward, from posets to
quasi--posets, starting with the very natural:

\begin{defn}
  A quasi--ormould $M^{\dashv}$ , with values in the commutative algebra
  $\mathcal{A}$, and indexed by elements of a semi--group $\Omega$, is a
  collection of elements of $\mathcal{A}$ indexed by sequences
  $\tmmathbf{\omega}^{\dashv}$ of elements of $\Omega$ endowed with a
  quasi--order on the labelling set $[ r]$.
\end{defn}

In conformity with J. Ecalle's terminology, we shall call quasi--orsequence the
data of a sequence \ $( \omega_1, \ldots, \omega_r)$, with a quasi--order on
the set $[ r]$; equivalently, a quasi--orsequence is a labelled and
$\Omega$--decorated quasi--poset. A quasi--ormould \ can thus be seen as a
($\mathcal{A}$--valued) linear map on the vector space spanned by
quasi--orsequences and, with this definition, a {\tmem{separative
quasi--ormould}} is a $\mathcal{A}$--valued character of the
($\Omega$--decorated version of the) Hopf algebra $\mathcal{H}_T$. To
conclude, we now on focus on constant--type quasi--ormoulds, in other words
characters of of the Hopf algebra $\mathcal{H}_T$. The natural definition of
the product of two separative quasi--ormoulds is then nothing but the
convolution for the external coproduct $\Delta$, when seen as characters of
the Hopf algebra $\mathcal{H}_T$. The definition of the composition of
quasi--ormoulds itself amounts to the translation of the coproduct $\Gamma$
given above.

The commutativity of the diagram of Theorem \ref{structures} is equivalent to the fact that the composition of separative quasi-ormoulds is distributive with respect to the product:

\begin{equation}
(M_1M_2)\circ N=(M_1\circ N)(M_2\circ N).
\end{equation}

At this stage, and building on the previous constructions, we have at our
disposal a natural application from moulds to quasiormoulds that is the
natural generalization of the previous constructions of J. Ecalle : the
quasi--posetization $\mathfrak{Q}$ of a (constant type) symmetrel mould,
viewed as a character $\varphi$ on $\QSym$ is the character on $\mathcal{H}$
given by $\mathfrak{Q} ( \varphi) = \varphi \circ \lambda$ and theorem can
be rephrased in the mould formalism by stating that quasi--posetization
respects the product and composition product of quasi--ormoulds.


\section{Outlook}\label{sect:concl}

 The consideration of some of the basic constructions of J. Ecalle's mould
calculus, enhanced at the level of quasi--posets has thus led to the
construction of a new internal coproduct which interacts in a nice way with the
algebraic structures recently introduced in \cite{FMP} and \cite{FM}.
We then built, through the formalism of linear species, natural morphisms that respect all
the structures involved, from the algebras of quasi--posets to the algebras
$\QSym$ and $\WQSym$, which are pervasive objects by now in combinatorics and other
fields.

Which use can be made of this internal coproduct, and specifically in
interaction with the external one, to investigate properties of finite
topological spaces or to tackle purely combinatorial questions is at this
stage still open, but in its original field of application, the rich algebraic
structures of mould calculus have already made it possible to treat questions
that appear to be out of scope of other techniques (\cite{Eca92, Men06, Men07}).

Actually, two distinct features give striking efficiency of mould calculus in the domain of dynamical
sytems (and more recently for the study of Multiple Zeta Values):

-- The existence of several interacting operations (many more are contained in J. Ecalle's papers),

-- The existence of a collection of particular moulds, of constant use, with
closed--form expression.

In fact, some isolated examples of characters have appeared lately in algebraic
combinatorics or in the algebraic study of control theory (see e. g. the
mentions of works by A. Murua or F. Chapoton and others, recalled in \cite{CEM11}),
which are particular to moulds introduced and tabulated by J. Ecalle some time
ago. A striking example of the existence of some closed--form character,
completely independently from J. Ecalle's formalism but most similar, was
produced by J. Unterberger and the second author in the field of rough paths, see \cite{FU} and the
references therein. 

Another natural question is to study the internal products that might be
counterparts of the internal coproducts constructed in the present text and to
study them in connection with some products existing in the litterature (such
as the ones on the algebra $\mathbf{PQSym}$ of Parking Quasi Symmetric Functions of
Thibon et al); these questions are not
straightforward, as the duality on quasi--posets is degenerate
\cite{Foi13_Adv, FM}.

Finally, as recalled in the text for $\QSym$ and $\WQSym$, many interesting
combinatorial Hopf algebras have polynomial realizations, in which the basis
elements are realized as polynomials in an auxiliary set of commuting or non
commuting variables. Such presentations have many advantages, beyond e.g. the
very fast way of proving coassociativity by the doubling of alphabet trick
implemented above. Polynomial realizations were recently obtained in \cite{FNT14}
for the algebra of labelled forests and several related Hopf algebras: the
extensions of the ideas of \cite{FNT14} to the posets and quasi--posets Hopf algebras
remains to be done. The existence of a polynomial
realization would be expected, as the internal coproduct of $\WQSym$ is exactly the one induced by the
cartesian product of alphabets.


\end{document}